\theoremstyle{remark}
\newtheorem*{remark}{Remark}
\theoremstyle{remark}
\theoremstyle{remark}
\theoremstyle{remark}
\theoremstyle{remark}
\newtheorem{nnremark}{Remark}
\theoremstyle{plain}
\newtheorem{theorem}{Theorem}
\newtheorem{proposition}{Proposition}
\newtheorem{lemma}{Lemma}
\def \di {{d^{(0)}_{t}(f)}}
\def \dii {{d^{(1)}_{t}(f)}}
\title{Large Sieve Inequalities for periods of Maa{\ss} forms}
\author{Dimitrios Lekkas}
\address{ CoMENS Faculty, Northeastern University London, Devon House, 58 St Katharine's Way, London E1W 1LP}
\email{dimitrios.lekkas@nulondon.ac.uk}
\author{Marios Voskou}
\address{Department of Mathematics, University College London, Gower Street, London WC1E 6BT}
\email{marios.voskou.20@ucl.ac.uk}
\date{\today}
\subjclass[2020]{Primary 11F72, 11N35; Secondary 	44A15}
\date{\today}
\begin{document}
\begin{abstract} 
For $\Gamma$ a Fuchsian Group of the first kind, we obtain large sieve inequalities with weights the hyperbolic periods of Maa{\ss} forms of even weight. This is inspired by work of Chamizo, who proved a large sieve inequality with weights values of Maa{\ss} forms of weight $0$. The motivation is applications in counting problems in $\Gamma_1 \backslash \Gamma \slash \Gamma_2$, where $\Gamma_1$, $\Gamma_2$ are hyperbolic subgroups of $\Gamma$.
\end{abstract}
\maketitle
\section{Introduction}
In Analytic Number Theory, a large sieve inequality is an inequality that provides a non-trivial estimate for an exponential sum, with respect to the mean square of the coefficients. Large sieve inequalities are important, for example, in the study of averages of exponential sums, and spectral sums in general. A representative example is the following:
\begin{theorem}[Bombieri, \cite{bombieri}]
    Let $(a_n)_n$ be a sequence of complex numbers and let $x_1,\dots, x_R \in \mathbb{R} \slash \mathbb{Z}$ be such that $\left\| x_\mu- x_\nu\right\|>\delta$ for every $\mu \neq \nu$ and some $\delta>0$, where $\left\|x \right\|$ denotes the distance of $x$ from the closest integer. Then,
    $$ \displaystyle \sum_{\nu=1}^{R} \left| \sum_{n \leq N} a_ne^{2\pi i n x_\nu}\right|^2 \ll \left(N+\delta^{-1}\right) \sum_{n \leq N} \left|a_n\right|^2.$$
\end{theorem}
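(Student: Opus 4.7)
The plan is to reduce the inequality to a Sobolev-type (Gallagher) estimate on the circle combined with Parseval's identity. This keeps the argument self-contained and cleanly separates the two contributions $N$ and $\delta^{-1}$ in the final bound.

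First, I would establish a Sobolev-type lemma: for any $\delta$-separated points $x_1, \dots, x_R \in \mathbb{R}/\mathbb{Z}$ and any absolutely continuous $1$-periodic function $f$,
$$\sum_{\nu=1}^R |f(x_\nu)|^2 \ll \delta^{-1} \int_0^1 |f(t)|^2 \, dt + \left(\int_0^1 |f(t)|^2 \, dt\right)^{1/2}\left(\int_0^1 |f'(t)|^2 \, dt\right)^{1/2}.$$
The proof proceeds by taking, for each $\nu$, a suitable interval $I_\nu$ of length $\delta$ containing $x_\nu$; by the $\delta$-separation these intervals may be chosen disjoint (after ordering the points cyclically and handling the wrap-around point). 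On each $I_\nu$ I would integrate by parts to obtain an identity of the form $|f(x_\nu)|^2 = \delta^{-1} \int_{I_\nu} |f|^2 \, dt + \text{(boundary terms involving } f \bar{f}' \text{)}$, then sum over $\nu$ and apply Cauchy--Schwarz to the cross term.

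Next, I would apply this lemma to the trigonometric polynomial $S(x) = \sum_{n \leq N} a_n e^{2\pi i n x}$. By Parseval, $\int_0^1 |S(t)|^2 \, dt = \sum_n |a_n|^2$; since $S'(x) = 2\pi i \sum_{n \leq N} n \, a_n e^{2\pi i n x}$, Parseval again gives $\int_0^1 |S'(t)|^2 \, dt \leq (2\pi N)^2 \sum_n |a_n|^2$. Substituting into the Sobolev lemma and using $\sqrt{ab} \ll a + b$ on the mixed term yields
$$\sum_{\nu=1}^R |S(x_\nu)|^2 \ll (\delta^{-1} + N) \sum_{n \leq N} |a_n|^2,$$
which is exactly Bombieri's inequality.

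The step I expect to be the main obstacle is the Sobolev lemma itself. While the integration-by-parts identity is mechanical, one has to verify that the intervals $I_\nu$ can genuinely be chosen disjoint on the circle and that the implicit constants do not blow up at the cyclic boundary. Alternative routes via duality combined with Selberg's extremal majorants, or via the Montgomery--Vaughan generalisation of Hilbert's inequality, would produce the same $(N+\delta^{-1})$ bound (and sharper constants) but require noticeably more preparatory work, whereas the Sobolev route keeps the whole argument within a few lines once the lemma is in place.
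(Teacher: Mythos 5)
The paper does not prove this statement: it is quoted as a classical result of Bombieri and used only as a motivating example, so there is no internal proof to compare against. Your proposed argument is the standard Gallagher--Sobolev route and it is correct. Two small points of hygiene. First, the relation you call an ``identity'' on $I_\nu$ is really an inequality obtained by averaging the fundamental theorem of calculus: for $t\in I_\nu$ one has $|f(x_\nu)|^2\le |f(t)|^2+2\int_{I_\nu}|f\bar f'|\,du$, and averaging over $t\in I_\nu$ (of length $\delta$) gives $|f(x_\nu)|^2\le \delta^{-1}\int_{I_\nu}|f|^2\,dt+2\int_{I_\nu}|f\bar f'|\,dt$; the second term is an integral over all of $I_\nu$, not a boundary term in the integration-by-parts sense. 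Second, the disjointness worry is vacuous: since $\left\|x_\mu-x_\nu\right\|>\delta$ on $\mathbb{R}/\mathbb{Z}$, the arcs of length $\delta$ centred at the $x_\nu$ are automatically pairwise disjoint and the circle has no boundary to wrap around (in particular $R\delta\le 1$). With those clarifications, summing, applying Cauchy--Schwarz, and invoking Parseval for $S$ and $S'$ yields $\sum_\nu|S(x_\nu)|^2\le\big(\delta^{-1}+4\pi N\big)\sum_n|a_n|^2$, which is the claimed bound; the sharper constant $N+\delta^{-1}$ (rather than $\ll$) would indeed require Selberg majorants or Montgomery--Vaughan, but the paper only asserts the $\ll$ form.
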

Of particular interest in modern analytic number theory is the harmonic analysis in $\Gamma \backslash \mathbb{H}$, where $\Gamma$ is a Fuchsian group of the first kind and $\mathbb{H}=\left.\left\{ z=x+iy \in \mathbb{C} \right\vert y>0 \right\}$, the upper half-plane. An example of a large sieve inequality in this setting is the one introduced by Jutila in \cite{jutila}. This was used by Nordentoft--Petridis--Risager in \cite{nordentoft}, to study the mean square of shifted convolution sums for
Hecke eigenforms. In \cite{chamizo1} and \cite{chamizo2}, Chamizo considered another large sieve inequality for $\Gamma \backslash \mathbb{H}$, more closely related to our work. We now describe the setup for his work.

We denote by $\mathfrak{h}_m$ the space of $L^2(\mathbb{H})$\hyp{}functions that transform as
$$F(\gamma z)=j^{2m}_{\gamma}(z)F(z)$$ under $\Gamma$, where for $\gamma =\begin{pmatrix}
a & b \\
c & d 
\end{pmatrix}$ we have $$j_{\gamma}(z)=\frac{cz+d}{\left|cz+d\right|}.$$
Let \begin{equation} \label{dms} D_m=y^2 \left ( \frac{\partial ^2}{\partial x^2} + \frac{\partial^2}{\partial y^2} \right) - 2imy\frac{\partial}{\partial x} \nonumber \end{equation} be the Laplacian in $\mathfrak{h}_m$.
Here, $L^2(\mathbb{H})$ is the space of functions $f$ such that $\left \langle f, \, f \right \rangle$ finite, where, for $f,g$ such that $f\cdot \overline{g}$ well-defined in $\Gamma \backslash \mathbb{H}$, we define
$$\left \langle f, \, g \right \rangle := \int_{\Gamma \backslash \mathbb{H}}f(z) \cdot \overline{g(z)} \; \,\frac{dx \, dy}{y^2}.$$
Fix a real-valued orthonormal $\mathfrak{h}_0$\hyp{}eigenbasis $\left(u_{0,j}\right)_j$ with respect to $D_0$, with corresponding eigenvalues $\left(\lambda_j\right)_j$.
Let also $E_{\mathfrak{a}}\left(z, \, s\right)$ denote the Eisenstein series with respect to the cusp $\mathfrak{a}$ (see \cite[(3.11)]{iwaniec}). Chamizo proved the following theorem. 

\begin{theorem}[Chamizo, \cite{chamizo1}] \label{chamizo}
Let $z \in \Gamma \backslash \mathbb{H}$.  Let $(a_n)_n$ be a sequence of complex numbers, and, for every cusp $\mathfrak{a}$, let $a_{\mathfrak{a}}(t)$ be a continuous function. Furthermore, let $T,X>1$ and $x_1, \dots , x_R \in [X,2X]$. If $|x_\nu-x_\mu| > \delta >0$ for $\nu \neq \mu$, then
   
$$
    \sum_{\nu=1}^R \Big| \sum_{|t_j|\le T}a_jx_{\nu}^{it_j}u_{0,j}(z)+\frac{1}{4\pi}\sum_{\mathfrak{a}}\int_{-T}^{T}a_{\mathfrak{a}}(t)x_{\nu}^{it}E_{\mathfrak{a}}(z,1/2+it)\, dt\Big|^2 \ll \big(T^2+XT\delta^{-1}\big)||\mathbf{a}||_*^2 \; ,
$$   

where $$||\mathbf{a}||_*= \Big(\sum_{|t_j| \le T}|a_j|^2+\frac{1}{4\pi}\sum_{\mathfrak{a}}\int_{-T}^{T}|a_{\mathfrak{a}}(t)|^2 \, dt \Big)^{1/2} \;.$$
\end{theorem}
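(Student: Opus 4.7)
The approach combines three standard ingredients: the duality principle for large-sieve bilinear forms, the Selberg pre-trace formula, and the one-dimensional large sieve inequality (Theorem 1).

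\emph{First, dualize.} The claimed inequality is equivalent, via the standard duality principle for operator norms, to the dual statement: for every complex sequence $(c_\nu)$,
\begin{equation*}
\sum_{|t_j|\le T}|B(t_j)|^2|u_{0,j}(z)|^2+\frac{1}{4\pi}\sum_\mathfrak{a}\int_{-T}^T|B(t)|^2|E_\mathfrak{a}(z,1/2+it)|^2\,dt\ll(T^2+XT\delta^{-1})\sum_\nu|c_\nu|^2,
\end{equation*}
where $B(t):=\sum_\nu c_\nu x_\nu^{-it}$.

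\emph{Second, extract a local spectral density.} The key automorphic input is a short-interval estimate obtained from the Selberg pre-trace formula: for every $t_0\in[-T,T]$,
\begin{equation*}
\sum_{|t_j-t_0|\le 1}|u_{0,j}(z)|^2+\frac{1}{4\pi}\sum_\mathfrak{a}\int_{|t-t_0|\le 1}|E_\mathfrak{a}(z,1/2+it)|^2\,dt\ll T.
\end{equation*}
This is proved by choosing a smooth test function majorizing $\mathbf{1}_{[t_0-1,t_0+1]}$; the identity orbit contributes $\frac{1}{4\pi}\int h(t)t\tanh(\pi t)\,dt\ll 1+|t_0|$, while the remaining $\Gamma$-orbits are controlled by elementary lattice-point counts in $\mathbb{H}$.

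\emph{Third, invoke Theorem 1.} Cover $[-T,T]$ by unit intervals and use the density bound above to reduce the dual inequality to showing $\int_{-T}^T|B(t)|^2\,dt\ll(T+X\delta^{-1})\sum_\nu|c_\nu|^2$. Since $B$ is a continuous trigonometric polynomial with frequencies $\log x_\nu$ which are $\gg\delta/X$-separated (because $x_\nu\in[X,2X]$ and $|x_\mu-x_\nu|>\delta$), this is precisely the continuous analogue of Theorem 1 (Bombieri--Montgomery--Vaughan). Multiplying by the factor $T$ coming from the density bound yields the desired $(T^2+XT\delta^{-1})\sum_\nu|c_\nu|^2$.

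\textbf{Main obstacle.} The delicate step is the passage from the pointwise values $|B(t_j)|^2$ (and the Eisenstein integrand) to the integrated $\int_{-T}^T|B(t)|^2\,dt$ in the third step. Bounding $\sup_I|B|^2$ on a unit interval $I$ by $\int_I|B|^2\,dt$ via Sobolev inequalities introduces a spurious factor of $\log X$ coming from $|B'(t)|\ll\log X\cdot\sum_\nu|c_\nu|$. The cleanest remedy is to remain entirely in pre-trace form: expand $|B(t)|^2=\sum_{\mu,\nu}c_\mu\bar c_\nu(x_\mu/x_\nu)^{-it}$ and apply the pre-trace formula directly to the twisted test function $\psi(t)(x_\mu/x_\nu)^{-it}$, with $\psi$ a Schwartz majorant of $\mathbf{1}_{[-T,T]}$. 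One must then estimate its Selberg--Harish-Chandra transform on the identity orbit by stationary phase/integration by parts uniformly in the ratio $r=x_\mu/x_\nu$ (producing $\asymp T^2$ for $r=1$ and rapid decay in $T|\log r|$ otherwise), and control the $\Gamma$-orbit sum using the geometry of $\Gamma\backslash\mathbb{H}$. Securing this uniformity in $r$ is the technical heart of the argument.
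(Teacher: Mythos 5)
The paper does not prove Theorem \ref{chamizo}: it is quoted from \cite{chamizo1} and used as motivation, so the only in-paper object to compare against is the proof of the analogous Theorem \ref{sievevar}, which transplants Chamizo's argument to periods. Your eventual strategy --- dualize, open the square, and apply the pre-trace formula to a test function whose Selberg/Harish--Chandra transform is a smooth majorant of $\mathbf{1}_{[-T,T]}$ twisted by $\cos(t\log(x_\mu/x_\nu))$, then exploit the separation $|\log(x_\mu/x_\nu)|\gg j\delta/X$ --- is exactly Chamizo's method and is the same skeleton the paper uses for Theorem \ref{sievevar} (with the relative trace formula of Theorem \ref{modtrace} and the estimates of Lemma \ref{estlemma} replacing the pre-trace formula and the point-pair kernel bounds). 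Two remarks on your intermediate route and your stated obstacle. First, the ``spurious $\log X$'' in the Sobolev/Gallagher step is removable: since $|B(t)|=\bigl|\sum_\nu c_\nu (x_\nu/X)^{-it}\bigr|$, you may work with frequencies $\log(x_\nu/X)\in[0,\log 2]$, which are still $\gg\delta/X$-separated, so $\int|B'|^2\ll(T+X\delta^{-1})\sum|c_\nu|^2$ without any $\log X$; hence the ``local Weyl law plus continuous large sieve'' route also closes, and you did not actually need to abandon it. Second, as written your proposal is an outline: the diagonal transform asymptotics uniform in $r=x_\mu/x_\nu$ (the analogue of Lemma \ref{estlemma}(a)), the hyperbolic lattice-count bound for the non-identity orbits (the analogue of Lemma \ref{estlemma}(b),(c) together with the counting bound for $B(\gamma)$), and the final summation over the separated points (the analogue of the computation ending in the paper's bound $T+\delta^{-1}X$, here producing $T^2+XT\delta^{-1}$ because the identity term is $\asymp T^2$ rather than $\ll T$) are asserted rather than carried out; these are precisely the steps that constitute the bulk of Chamizo's paper and of Section \ref{pfofsieve} here.
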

Sums involving the exponential $X^{it_j}$ appear in the error term of hyperbolic counting problems, when estimated using spectral methods. When improvements in upper bounds for the error term are very difficult, we often consider its mean square instead. One famous example is the error term of the prime geodesic theorem. Cherubini and Guerreiro in \cite{cherubini} proved an upper bound for the mean square error of the prime geodesic theorem. Balog et al in \cite{balog} improved their result.

Large sieve inequalities involving the exponential $X^{it_j}$ are used in the study of the mean square of such error terms, as they provide cancellations when averaging over different values of $X$.  In particular, Chamizo uses Theorem \ref{chamizo} to provide upper bounds for the mean square of the error term in the classical hyperbolic lattice-counting problem (see \cite[Prop 2.1, Cor 2.1.1]{chamizo2}). Theorem \ref{chamizo} has also been used by Chatzakos--Petridis to prove an analogous upper bound for the mean square error term in the hyperbolic lattice problem in $\Gamma_1 \backslash \Gamma \slash \Gamma_2,$ where $\Gamma_1$ and $\Gamma_2$ are a hyperbolic and an elliptic subgroup of $\Gamma$ respectively (see \cite[Thm 1.2]{chatzakos}). Both of these problems are cases of the nine double-coset hyperbolic counting problems studied by A. Good in \cite{good}. 
In this paper, we prove a family of new large sieve inequalities for periods of Maa{\ss} forms, that will be crucial in upcoming results for the mean square error term in the case when $\Gamma_1$ and $\Gamma_2$ are both hyperbolic.

We define the Maa{\ss} raising operators by
$$K_m=(z-\bar{z})\frac{\partial}{\partial z}+m.$$
It can be shown that $K_m$ maps $\mathfrak{h}_m$ to $\mathfrak{h}_{m+1}$ (see \cite[p.308]{roelcke}). Furthermore, for any $m$, the functions $\left(u_{m,j}\right)_j$ defined recursively by $$u_{m+1,j}=\frac{i}{\sqrt{\lambda_j+m^2+m}} \cdot K_mu_{m,j}$$ form an orthonormal $\mathfrak{h}_m$\hyp{}eigenbasis for $D_m$ with the same corresponding eigenvalues (see \cite[ p.146, eq.11]{fay}).
In a similar manner, we define
$$E_{\mathfrak{a},0}\left(z, \, s\right):=E_{\mathfrak{a}}\left(z, \, s\right), \quad E_{\mathfrak{a},m+1}\left(z, \, s\right):=\frac{i}{\sqrt{1/4+t^2+m^2+m}} \cdot K_m E_{\mathfrak{a},m}\left(z, \, s\right).$$
Let $\Gamma_1$ be a hyperbolic subgroup of $\Gamma$, fixing the geodesic $l$ in $\Gamma \backslash \mathbb{H}$.
We further define the \emph{periods}
$$\hat{u}_{m,j}:=\int_{l}u_{m,j}(z) ds(z), \; \hat{E}_{\mathfrak{a}, m}(s'):=\int_{l}E_{\mathfrak{a},m}\left(z, \, s'\right) ds(z),$$
where $ds(z)$ is the Poincar\'e metric defined by $$ds(z)^2=y^{-2}\left( dx^2+dy^2 \right).$$
We prove large sieve inequalities analogous to Theorem \ref{chamizo},  with weights $\hat{u}_{m,j}$ instead of $u_{0,j}(z)$.
\begin{theorem} \label{sievevar}
    Let $m$ be a non-negative integer. Let $T,X>1$ and $x_1, \dots , x_R \in [X,2X]$. If $|x_\nu-x_\mu| > \delta >0$ for $\nu \neq \mu$, then
       \begin{equation*} \sum_{\nu=1}^R \Big| \sum_{|t_j|\le T}a_jx_\nu^{it_j}\widehat{u}_{m,j}+\frac{1}{4\pi}\sum_{\mathfrak{a}}\int_{-T}^{T}a_{\mathfrak{a}}(t)x_{\nu}^{it}\hat{E}_{\mathfrak{a}, m}(1/2+it) \, dt\Big|^2 \ll \big(T+X\delta^{-1}\big)||\mathbf{a}||_*^2,\end{equation*}
   where the implicit constant depends on the geodesic segment $l$ and the group $\Gamma$.
\end{theorem}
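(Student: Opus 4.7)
My approach adapts Chamizo's proof of Theorem~\ref{chamizo} to the weight-$m$ period setting. The heuristic behind the improvement from his $T^2+XT\delta^{-1}$ to the present $T+X\delta^{-1}$ is that integrating point values of Maa{\ss} forms over the closed geodesic $l$ turns spectral sums into one-dimensional Fourier-type sums along $l$, each saving a factor of $T$ via the spectral density.

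First, by duality, letting $L\colon \mathbf{a}\mapsto (\mathcal{L}_\nu)_\nu$ denote the map from the $\|\cdot\|_*$-Hilbert space to $\ell^2$, one has $\|L\|^2=\|LL^\ast\|$, which by Schur's test is controlled by $\sup_\nu\sum_\mu |(LL^\ast)_{\nu,\mu}|$. The theorem thus reduces to the estimate $\sup_\nu\sum_{\mu=1}^R |M_{\nu,\mu}|\ll T+X\delta^{-1}$, where
\begin{equation*}
M_{\nu,\mu}=\sum_{|t_j|\le T}\Big(\frac{x_\nu}{x_\mu}\Big)^{it_j}|\widehat u_{m,j}|^2+\frac{1}{4\pi}\sum_\mathfrak{a}\int_{-T}^T\Big(\frac{x_\nu}{x_\mu}\Big)^{it}|\widehat E_{\mathfrak{a},m}(1/2+it)|^2\,dt.
\end{equation*}
Setting $y_\mu=\log(x_\nu/x_\mu)$, one has $|y_\mu|\ll 1$ globally and $|y_\mu|\gg \delta/X$ for $\mu\ne\nu$.

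Next, I pick a non-negative even Schwartz majorant $h\ge\mathbbm{1}_{[-T,T]}$ of size $O(1)$ and dominate $|M_{\nu,\mu}|$ by the corresponding twisted spectral sum with $h$ replacing the sharp cutoff. I then apply the weight-$m$ pre-trace formula of Fay~\cite{fay}, integrated twice along $l$, to rewrite this sum as
\begin{equation*}
\sum_{\gamma\in\Gamma_1\backslash\Gamma/\Gamma_1}n(\gamma)\iint_{l\times l}k^{(m)}_{h,y_\mu}(z,\gamma w)\,ds(z)\,ds(w),
\end{equation*}
where $k^{(m)}_{h,y}$ is the weight-$m$ invariant kernel with Selberg--Harish-Chandra transform $t\mapsto h(t)e^{ity}$, the double cosets are with respect to the hyperbolic stabilizer $\Gamma_1$ of $l$, and $n(\gamma)$ is the multiplicity from the double-coset unfolding. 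Geometrically, the phase $e^{ity}$ represents a shift by $y$ along the axis of $\Gamma_1$. The identity double coset contributes a one-dimensional convolution of the form $|l|\int h(t)e^{ity}W(t)\,dt$ for a polynomial-growth density $W$; this is $O(T)$ at $y=0$ and $O_N(T(1+T|y|)^{-N})$ otherwise by repeated integration by parts. Non-identity cosets are controlled by the rapid off-geodesic decay of $k^{(m)}_{h,y}$ combined with the lattice count $\#\{\gamma\in\Gamma_1\backslash\Gamma/\Gamma_1:d(l,\gamma l)\le R\}\ll_{l,\Gamma}e^R$.

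Summing over $\mu$, the identity contribution yields $O(T)$ from $\mu=\nu$ plus $O(\sum_{\mu\ne\nu}T(T|y_\mu|)^{-N})\ll X\delta^{-1}$ from the spacing $|y_\mu|\gg\delta/X$; the non-identity contribution is bounded by $O(X\delta^{-1})$ analogously. For $m>0$ one may either invoke the weight-$m$ pre-trace directly, or iterate the raising-operator relation $u_{m,j}=c_m(t_j)K_{m-1}u_{m-1,j}$ and integrate by parts along $l$ to reduce to $m=0$. I expect the principal obstacle to lie in the non-identity geometric bound: tracking, uniformly in $\mu$, the interplay between the shift $y_\mu$ and the double-coset representatives, and handling the Eisenstein continuous part (where the raising normalizations $c_m(t)$ may grow with $m$), will require an oscillatory analysis with no direct analogue in Chamizo's point-value setting.
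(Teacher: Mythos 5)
Your overall architecture --- duality/Schur reduction to $\sup_\nu\sum_\mu|M_{\nu\mu}|$, smoothing the sharp cutoff, converting the twisted spectral sum into a geometric sum over $\Gamma_1\backslash\Gamma/\Gamma_1$ via a trace formula relative to the geodesic, and then exploiting the spacing $|y_\mu|\gg\delta/X$ --- is exactly the paper's strategy (the paper's Theorem~\ref{modtrace} is precisely your ``pre-trace formula integrated twice along $l$''). However, there are two concrete gaps. First, your majorization step is applied in the wrong order: once you have opened the square, the summands of $M_{\nu\mu}$ are $e^{it_jy_\mu}|\widehat u_{m,j}|^2$, which are not non-negative for $y_\mu\neq0$, so the inequality ``$|M_{\nu\mu}|\le$ the same sum with $h\ge\mathbbm{1}_{[-T,T]}$ in place of the sharp cutoff'' is simply false; the error in swapping cutoffs is of size $\sum_j(h(t_j)-\mathbbm{1}(t_j))|\widehat u_{m,j}|^2\asymp T$ per pair, which summed over $\mu$ destroys the bound. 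The fix (which the paper uses, following Chamizo) is to insert the majorant \emph{before} opening the square, while the quantity is still $\sum_j h(t_j)\big|\sum_\nu b_\nu x_\nu^{it_j}\widehat u_{m,j}\big|^2$ and positivity is available.

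Second, your reduction of general $m$ to $m=0$ via raising operators does not work for odd $m$. The raising/lowering relations only yield a recursion linking $\widehat u_{m+2,j}$ to $\widehat u_{m,j}$ (Eq.~(\ref{highperiods})): the cross terms $2iy\,\partial_y u_{m+1,j}$ integrate to zero along $l$ by periodicity, but there is no analogous identity expressing $\widehat u_{1,j}$ in terms of $\widehat u_{0,j}$ --- indeed $\widehat u_{1,j}$ is essentially the period of the \emph{normal} derivative of $u_{0,j}$ along $l$, an independent quantity. So the parity classes are genuinely separate, and $m=1$ requires its own relative trace formula (Theorem~\ref{modtrace}(b)), built from the transform $d_t^{(1)}$ attached to the series $A_f^{(1)}$, together with modified spectral weights of the shape $\lambda_j t_j^{-2}e^{-t_j^2/4T^2}(1-e^{-t_j^2/4})$ to manufacture the factor $\lambda_j$ that formula carries. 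Finally, the non-identity geometric estimate you defer as ``the principal obstacle'' is where the paper does real work: it inverts $d_t^{(0)},d_t^{(1)}$ explicitly (Proposition~\ref{inversionprop}, Lemma~\ref{keyestim}) to get pointwise decay $f_0(x^2+1)\ll Te^{-T^2r^2/2}x^{-2}$, which beats the coset count $\#\{|B(\gamma)|\le X\}\ll X$; a bare appeal to ``rapid off-geodesic decay'' of a kernel with transform $h(t)e^{ity}$ does not by itself produce the uniformity in $y$ you need.
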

\begin{remark}
    As we prove in section \ref{pfofsieve}, it is easy to show the relations
\begin{equation} \label{highperiods} \hat{u}_{m+2,j}=-\sqrt{\frac{m^2+m+\lambda_j}{m^2+3m+2+\lambda_j}}\hat{u}_{m,j}, \quad \hat{E}_{\mathfrak{a}, m+2}(s)=-\sqrt{\frac{m^2+m+s(1-s)}{m^2+3m+2+s(1-s)}} \hat{E}_{\mathfrak{a}, m}(s).\end{equation}
It follows that it is enough to prove the theorem for $m=0$ and $m=1$.
\end{remark}

\subsection{Preliminaries}
By conjugating the group, we assume that $l$ lies on the imaginary axis, which we denote by $I$. We denote by $\hbox{len}(l)$ the hyperbolic length of $l$:
$$\hbox{len}(l):=\int_{l} 1 \; ds(z).$$
For $z=x+iy \in \mathbb{C}$, we define the \emph{Huber coordinates} $(u,v)$ as follows:
\begin{equation} \label{coordsys} u(z)=\log{|z|}, \; \; \; \; v(z)=-\arctan{\left(\frac{x}{y}\right)},\nonumber \end{equation}
or, equivalently,
$$x=-e^u\sin{v}, \; \; \; \; y=e^u\cos{v}. $$  
We note that, for $\gamma$ diagonal, we have $$u\left(\gamma z \right)=u(z)+\log{\lambda},$$ where $\lambda$ is the norm of $\gamma$, and
$$v\left(\gamma z\right)=v(z).$$
Note further that $v(z)$ can be interpreted as the anticlockwise angle formed between $z=ie^{u+iv}$ and the positive imaginary axis. 
The following two transforms are used in the study of the hyperbolic-hyperbolic case of the double coset counting problem:
\begin{align}d^{(0)}_{t}(f)&=\int_{0}^{\frac{\pi}{2}} \frac{1}{\cos^2{v}}f\left(\frac{1}{\cos^2{v}}\right)\cdot {}_2F_1\left(\frac{s}{2},\frac{1-s}{2} \, ; \frac{1}{2} \, ; -\tan^{2}{v} \right)\, dv, \label{d0def} \\
d^{(1)}_{t}(f)&=\int_{0}^{\frac{\pi}{2}}\frac{\tan^2{v}}{\cos^2{v}} f\left(\frac{1}{\cos^2{v}}\right) \cdot {}_2F_1\left(\frac{s+1}{2},\frac{2-s}{2} \, ; \frac{3}{2} \, ; -\tan^{2}{v} \right)\, dv,\label{d1def}\end{align}
where $s=1/2+it$. In this paper, instead of using the formulas (\ref{d0def}) and (\ref{d1def}) directly, we express $d^{(0)}_{t}$ and $d^{(1)}_{t}$  in terms of the Selberg/Harish-Chandra transform (see the proof of Proposition \ref{inversionprop}).

The transform $d_{t}^{(0)}(f)$ is called the Huber transform of $f$, and it appears in the spectral expansion of the \emph{Huber series} $A^{(0)}_f(z)$ (see \cite[eq.(4),(26)]{huber}), defined by
$$A^{(0)}_f(z):=\sum_{\gamma \in \Gamma_1 \backslash \Gamma}f\left(\frac{1}{\cos^2{\left( v(\gamma z)\right)}}\right).$$
This is an automorphization of $f$ composed with $1/\cos^2{\left( v(z)\right)}$, with respect to $\Gamma_1 \backslash \Gamma$, where $1/\cos^2{\left( v(z)\right)}$ is associated to the hyperbolic distance of $z$ from the imaginary axis, $\rho(z,I)$, via the relation $1/\cos^2{\left( v(z)\right)}=\cosh^2{\rho(z,I)}$.
Huber in \cite{huber} and Chatzakos--Petridis in \cite{chatzakos} study the elliptic-hyperbolic double coset counting problem using the spectral expansion of the Huber series. The first author in his thesis \cite{lekkas} used this spectral expansion to find a relative trace formula, which he used in the study of the hyperbolic-hyperbolic double coset problem. 

On the other hand, the transform $d_t^{(1)}(f)$ appears in the spectral expansion of the series $A_{f}^{(1)}(z)$, defined by
$$A^{(1)}_f(z):=\sum_{\gamma \in \Gamma_1 \backslash \Gamma} \tan{\left(v\left( \gamma z \right) \right)}f\left( \frac{1}{ \cos^2\left( v\left( \gamma z \right) \right)} \right).$$
In \cite{voskou}, the second author proves a refined version of the hyperbolic-hyperbolic double coset counting problem, using the spectral expansion of $A_f^{(1)}(z)$. This refined version is important for certain arithmetic applications regarding totally real quadratic fields. Such applications have also been studied by Hejhal in the series of papers \cite{hejhal1},\cite{hejhal2}, \cite{hejhal3}, without detailed proofs. 

In particular, we have the following spectral expansions.
\begin{theorem}[\cite{chatzakos}, \cite{voskou}] \label{spectralexpthm}
    Assume that $\Gamma$ is a Fuchsian Group of the first kind. For $f$ a continuous, piecewise differentiable function with exponential decay at infinity,
we have the following spectral expansions:
\begin{align*}
\textup{(a)}& \displaystyle & A^{(0)}_f(z)=&\;2\sum_{j}d^{(0)}_{t_j}(f)\hat{u}_{0,j}u_{0,j}(z)
-\sum_{\mathfrak{a}}\frac{i}{2\pi}\int_{\left(1/2\right)}d_t^{(0)}(f)\hat{E}_{\mathfrak{a},0}\left(s\right)\overline{E_{\mathfrak{a},0}\left(z,s\right)}ds
,  \\ 
\textup{(b)}&  \displaystyle & A^{(1)}_f(z)=&\;2\sum_{j}\sqrt{\lambda_j}d^{(1)}_{t_j}(f)\hat{u}_{1,j}u_{0,j}(z)
-\sum_{\mathfrak{a}}\frac{i}{2\pi}\int_{\left(1/2\right)}\sqrt{s(1-s)} \cdot d_t^{(1)}(f)\hat{E}_{\mathfrak{a},1}\left(s\right)\overline{E_{\mathfrak{a},0}\left(z,s\right)}ds
.
\end{align*}
\end{theorem}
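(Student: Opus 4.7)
The plan is to adapt Chamizo's duality-plus-test-function proof of Theorem \ref{chamizo}, replacing his use of the weight-$0$ pre-trace formula by a geodesic integration of the spectral expansions in Theorem \ref{spectralexpthm}. By the identities \eqref{highperiods} it suffices to treat the base cases $m=0$ and $m=1$; the basis $(u_{0,j})_j$ is chosen real, and a short raising-operator computation shows $\hat u_{1,j}$ is real too (its would-be imaginary part is a telescoping $\Gamma_1$-boundary term that vanishes), so all the $\hat u_{m,j}$ appearing in the statement are real.

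By the standard $\ell^2$-duality for the linear map $(a_j,a_{\mathfrak a}(t))\mapsto\bigl(\sum_j a_jx_\nu^{it_j}\hat u_{m,j}+\cdots\bigr)_\nu$, the inequality is equivalent to the dual bound
\[
\sum_{|t_j|\le T}|\hat u_{m,j}|^2\Big|\sum_\nu b_\nu x_\nu^{it_j}\Big|^2+\frac1{4\pi}\sum_{\mathfrak a}\int_{-T}^T|\hat E_{\mathfrak a,m}(\tfrac12+it)|^2\Big|\sum_\nu b_\nu x_\nu^{it}\Big|^2 dt\ll(T+X\delta^{-1})\sum_\nu|b_\nu|^2.
\]
I would majorise the sharp truncations by a non-negative smooth $h$ with $h\ge\mathbbm{1}_{[-T,T]}$ and mass $\asymp T$, expand the squares, and collect the resulting terms as a bilinear form $\sum_{\mu,\nu}b_\mu\overline{b_\nu}\,\mathcal K_m(x_\mu/x_\nu)$, where
\[
\mathcal K_m(Y):=\sum_j h(t_j)|\hat u_{m,j}|^2 Y^{it_j}+\frac1{4\pi}\sum_{\mathfrak a}\int h(t)|\hat E_{\mathfrak a,m}(\tfrac12+it)|^2 Y^{it}\,dt.
\]

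The central step is to realise $\mathcal K_m(Y)$ as a geometric integral via Theorem \ref{spectralexpthm}. For $m=0$ I would choose $f_Y^{(0)}$ with $d_t^{(0)}(f_Y^{(0)})=\tfrac12 h(t)Y^{it}$, by inverting the Huber transform $d_t^{(0)}$, and integrate Theorem \ref{spectralexpthm}(a) along $l$ to obtain $\mathcal K_0(Y)=\int_l A^{(0)}_{f_Y^{(0)}}(z)\,ds(z)$. A direct geodesic integration of Theorem \ref{spectralexpthm}(b) for $m=1$ only produces the mixed quantity $\hat u_{1,j}\hat u_{0,j}$ rather than $|\hat u_{1,j}|^2$; I would repair this by first applying the Maass raising operator $K_0$ to both sides of the expansion, converting $u_{0,j}(z)$ to $-i\sqrt{\lambda_j}\,u_{1,j}(z)$, and only then integrating along $l$. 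Choosing $f_Y^{(1)}$ so that $\lambda_j d_t^{(1)}(f_Y^{(1)})$ matches $\tfrac{i}{2}h(t)Y^{it}$ then gives $\mathcal K_1(Y)=C\int_l\bigl(K_0A^{(1)}_{f_Y^{(1)}}\bigr)(z)\,ds(z)$ for a numerical constant $C$.

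Finally, I would unfold this Huber-type geodesic integral along $(\Gamma_1,\Gamma_1)$-double cosets. The identity coset produces a main term of the form $\mathrm{len}(l)\cdot f_Y^{(m)}(1)\cdot(\text{weight factor})$, and standard Harish-Chandra-type estimates yield $f_Y^{(m)}(1)\ll T$ uniformly for $Y$ near $1$; the diagonal terms $\mu=\nu$ then contribute the desired $T\sum_\nu|b_\nu|^2$. For $\gamma\notin\Gamma_1$ the integrand is an oscillatory integral in $Y^{it}$ localised on a short interval reflecting the distance between $I$ and $\gamma I$; combined with the spacing $|\log(x_\mu/x_\nu)|\gtrsim\delta/X$ forced by $|x_\mu-x_\nu|>\delta$ and $x_\nu\in[X,2X]$, plus a Cauchy--Schwarz in $(\mu,\nu)$, this yields the remaining $X\delta^{-1}\sum_\nu|b_\nu|^2$. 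The principal obstacle is precisely this last estimate: obtaining sharp, uniform-in-$Y$ bounds on $f_Y^{(m)}$ and on the extra $\tan v$-weighted transverse integration arising from $K_0A^{(1)}_{f_Y^{(1)}}$, and controlling absolutely the sum over the infinitely many non-trivial double cosets. This is where the geometry of the orbit $\Gamma\cdot I$ in $\GmodH$ enters, and where the dependence of the implicit constant on $l$ and $\Gamma$ is concentrated.
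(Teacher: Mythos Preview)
Your proposal does not address the stated theorem. You were asked to prove Theorem~\ref{spectralexpthm}, the spectral expansion of the Huber-type series $A_f^{(0)}(z)$ and $A_f^{(1)}(z)$; what you have sketched is a proof of Theorem~\ref{sievevar}, the large sieve inequality. You even invoke Theorem~\ref{spectralexpthm} as an input (``a geodesic integration of the spectral expansions in Theorem~\ref{spectralexpthm}''), so you are using the very statement you were meant to establish.

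For the record, the paper does not prove Theorem~\ref{spectralexpthm} either: it is quoted from \cite{chatzakos} and \cite{voskou}. An actual proof computes the spectral coefficients $\langle A_f^{(m)},u_{0,j}\rangle$ by unfolding the $\Gamma_1\backslash\Gamma$-sum and separating variables in Huber coordinates $(u,v)$: the $u$-integral over $l$ produces the period $\hat u_{0,j}$ (or $\sqrt{\lambda_j}\,\hat u_{1,j}$ once the $\tan v$ weight in part~(b) is accounted for), while the transverse $v$-integral against the eigenfunction yields the hypergeometric transforms $d_{t_j}^{(m)}(f)$ of \eqref{d0def}--\eqref{d1def}. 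None of this appears in your write-up.

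If you in fact intended Theorem~\ref{sievevar}, your outline is broadly aligned with the paper's Section~\ref{pfofsieve} (duality, positive smooth majorant, opening the square, geometric expansion), with one structural difference: the paper does not rederive the geometric side by integrating Theorem~\ref{spectralexpthm} along $l$. It appeals directly to the relative trace formulae of Theorem~\ref{modtrace}, whose geometric side is already an explicit sum over $\Gamma_1\backslash\Gamma/\Gamma_1$ with kernels $g_0,g_1$, and then feeds in the inversion of Proposition~\ref{inversionprop} and the estimates of Lemma~\ref{estlemma}. In particular the $m=1$ case is handled through Theorem~\ref{modtrace}(b), not via your raising-operator-then-integrate manoeuvre.
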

\begin{remark}
    We note that, while $u_{1,j}(z)$ is not necessarily real-valued over $\mathbb{C}$, the periods $\hat{u}_{1,j}$ always are. Indeed, $$ u_{1,j}(z):=\frac{i}{\sqrt{\lambda_j}}K_0u_{0,j}(z)=\frac{ie^{-iv}}{\sqrt{\lambda_j}}\cos{v}  \left(\frac{\partial}{\partial u}-i\frac{\partial}{\partial v} \right)u_{0,j}(z)$$ and, hence, by periodicity of $u_{0,j}(z)$ with respect to the parameter $u$,
    $$\hat{u}_{1,j}:=\int_{l}u_{1,j}(z) ds(z)=\int_{0}^{ \textrm{len}(l)}u_{1,j}(z) \, du =\lambda_j^{-1/2} \int_{0}^{ \textrm{len}(l)}  \frac{\partial}{\partial v} u_{0,j}(z)\, du,$$ is real, as $u_{0,j}(z)$ is real valued. The lack of conjugating the second factor in the spectral expansions is, therefore, justified.
\end{remark}
A main ingredient in our proof is the following pair of relative trace formulae, used in the study of the hyperbolic-hyperbolic problem in \cite{lekkas} and \cite{voskou}.
\begin{theorem}[Relative Trace Formulae \cite{lekkas}, \cite{voskou}] \label{modtrace} Let $f$ be a real, continuous, piecewise differentiable function with exponential decay. Let $\varepsilon$ be equal to $1$ if $\Gamma$ has an element with zero diagonal entries, and $0$ otherwise. Then, we have
\begin{align*}
\hbox{\textup{(a)} }& \; \displaystyle & (1+\varepsilon) f(1)\mathrm{len}(l)+\sum_{\gamma \in \Gamma_1 \backslash \Gamma \slash \Gamma_1-\mathrm{id}} g_0\left(B(\gamma);f\right)&=&2&\sum_{j} d^{(0)}_{t_j}(f)\hat{u}^2_{0,j}&+E^{(a)}(f),\\
\hbox{\textup{(b)} }& \; \displaystyle &(1-\varepsilon) f(1)\mathrm{len}(l)+\sum_{\gamma \in \Gamma_1 \backslash \Gamma \slash \Gamma_1-\mathrm{id}} g_1\left(B(\gamma); f\right)&=&2&\sum_{j} \lambda_jd^{(1)}_{t_j}(f)\hat{u}^2_{1,j}&+E^{(b)}(f),
\nonumber
\end{align*}
where $B(\gamma)= ad+bc$,
 $\mathrm{len}(l)$ is the hyperbolic length of $l$, 
\begin{eqnarray}g_0(u;f)&:=&2\int_{ \sqrt{\mathrm{max}(u^2-1,0)}}^{\infty}  \frac{f\left( x^2+1  \right)}{\sqrt{x^2+1-u^2}}dx=\int_{ \mathrm{max}(u^2,1)}^{\infty}  \frac{f\left( t  \right)}{\sqrt{t-u^2}}\frac{dt}{\sqrt{t-1}},\nonumber \\
g_1(u;f)&:=&u \cdot g_0\left(u; \, f+2\sqrt{x-1}\cdot f'\right), \nonumber
\end{eqnarray}

and
\begin{eqnarray}&E^{(a)}(f):=-&\sum_{\mathfrak{a}} \frac{i}{2 \pi}  \int_{\left(1/2\right)} \di  \left|\hat{E}_{\mathfrak{a},0}\left(s \right) \right|^2  
\,ds, \nonumber \\
&E^{(b)}(f):=-&\sum_{\mathfrak{a}} \frac{i}{2 \pi}  \int_{\left(1/2\right)} s(1-s)\dii  \left|\hat{E}_{\mathfrak{a},1}\left(s \right) \right|^2  
\,ds. \nonumber \end{eqnarray}
\end{theorem}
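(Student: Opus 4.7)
The plan is to derive both identities by integrating the spectral expansions from Theorem \ref{spectralexpthm} along the geodesic $l$, pairing each spectral side against geometric unfolding into $\Gamma_1 \backslash \Gamma / \Gamma_1$ double cosets.

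For (a), I would integrate the expansion of $A^{(0)}_f(z)$ over $l$. The spectral side follows by term-by-term integration, using $\int_l u_{0,j}(z)\,ds(z)=\hat u_{0,j}$ and the analogous identity for Eisenstein series, producing exactly $2\sum_j d^{(0)}_{t_j}(f)\hat u_{0,j}^2+E^{(a)}(f)$. For the geometric side I would decompose $\int_l A^{(0)}_f\,ds=\int_{\Gamma_1\backslash I}\sum_{\gamma\in\Gamma_1\backslash\Gamma} f(1/\cos^2 v(\gamma z))\,ds(z)$ by double cosets. The intersection $\gamma^{-1}\Gamma_1\gamma\cap\Gamma_1$ is nontrivial precisely when $\gamma$ stabilises the imaginary axis $I$, i.e.\ when $\gamma\in\Gamma_1$ or $\gamma$ has zero diagonal entries; since $v(\gamma z)=0$ on $I$ in both cases, these "stabilising" cosets jointly contribute $(1+\varepsilon)f(1)\mathrm{len}(l)$. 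For a non-stabilising representative $\gamma$, a direct computation gives $\gamma(iy)=((acy^2+bd)+iy)/(c^2y^2+d^2)$, hence $1/\cos^2 v(\gamma(iy))=a^2c^2y^2+b^2d^2/y^2+(B(\gamma)^2+1)/2$. Performing the change of variables $u=\log y\to t=1/\cos^2v(\gamma z)$ (a 2-to-1 map onto $[\max(1,B^2),\infty)$ with Jacobian $[(t-1)(t-B^2)]^{-1/2}$) produces exactly $g_0(B(\gamma);f)$.

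For (b), direct integration of $A^{(1)}_f$ over $l$ is insufficient because it yields $\hat u_{0,j}\hat u_{1,j}$ rather than $\hat u_{1,j}^2$ on the spectral side. Instead, I would integrate the normal derivative $\partial_v A^{(1)}_f\big|_{v=0}$ along $l$. The Remark preceding the theorem provides $\sqrt{\lambda_j}\hat u_{1,j}=\int_l(\partial_v u_{0,j})\,du$, so term-by-term differentiation of the expansion gives $2\sum_j\lambda_j d^{(1)}_{t_j}(f)\hat u_{1,j}^2+E^{(b)}(f)$. On the geometric side, the identity coset contributes $\partial_v[\tan v\cdot f(\sec^2 v)]\big|_{v=0}=f(1)$, integrating to $f(1)\mathrm{len}(l)$. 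A short computation with a zero-diagonal $\sigma$ shows $v(\sigma z)\sim -v$ near $l$, so each zero-diagonal coset contributes $-f(1)\mathrm{len}(l)$; the stabilising cosets thus combine to $(1-\varepsilon)f(1)\mathrm{len}(l)$. For a non-stabilising $\gamma$, the key identity $\partial_v v(\gamma z)\big|_l=B(\gamma)/t_0$ (where $t_0=1/\cos^2 v(\gamma z)$), obtained by differentiating $v=-\arctan(\mathrm{Re}(\gamma z)/\mathrm{Im}(\gamma z))$ and using $ad-bc=1$, turns the integrand into $B(\gamma)[f(t_0)+2(t_0-1)f'(t_0)]$; the same change of variables as in (a), followed by an integration by parts converting the $(t-1)f'(t)$ factor into the $\sqrt{t-1}f'(t)$ form inside the definition of $g_1$, recovers $g_1(B(\gamma);f)$.

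The principal obstacle is the last step in (b): one must justify the integration by parts that matches the raw $(t-1)f'$ integrand against the $\sqrt{t-1}f'$ form used in the definition of $g_1$, carefully tracking boundary contributions at $t=\max(1,B^2)$ and at infinity. These boundary pieces must be shown to combine coherently with the stabilising-coset terms so that the net "identity" contribution really is $(1-\varepsilon)f(1)\mathrm{len}(l)$, with no leftover distributional artefacts from the Hadamard-type regularisation near $t=B^2$. Aside from this, the spectral side requires only term-by-term differentiation/integration of the expansions in Theorem \ref{spectralexpthm}, which is legitimate because the assumption of exponential decay on $f$ makes the expansions absolutely and uniformly convergent on compacta of $l$, together with their derivatives.
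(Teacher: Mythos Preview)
The paper does not give a self-contained argument here: it simply refers to \cite[\S 3.1]{lekkas} for (a) and to \cite[\S 3, \S 8]{voskou} for (b). Your strategy---integrate $A^{(0)}_f$ along $l$ for (a), integrate $\partial_v A^{(1)}_f\big|_{v=0}$ along $l$ for (b), then unfold the geometric side into $\Gamma_1\backslash\Gamma/\Gamma_1$ double cosets---is the natural route and is what those references carry out. Your intermediate computations (the formula for $1/\cos^2 v(\gamma(iy))$, the Jacobian $[(t-1)(t-B^2)]^{-1/2}$, and the identity $\partial_v v(\gamma z)\big|_{v=0}=B(\gamma)/t_0$) are all correct.

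The step you flag as the ``principal obstacle'' is not one. After the substitution $t_0=x^2+1$, your geometric contribution for a non-stabilising coset reads
\[
B\int_{\max(1,B^2)}^{\infty}\frac{f(t)+2(t-1)f'(t)}{\sqrt{(t-1)(t-B^2)}}\,dt
\;=\;2B\int_{\sqrt{\max(B^2-1,0)}}^{\infty}\frac{\bigl(xf(x^2+1)\bigr)'}{\sqrt{x^2+1-B^2}}\,dx,
\]
which is precisely the form in which the paper \emph{applies} $g_1$ later, in the proof of Theorem~\ref{sievevar}. The displayed definition $g_1(u;f)=u\cdot g_0\bigl(u;\,f+2\sqrt{x-1}\,f'\bigr)$ would instead yield the integrand $f(x^2+1)+2xf'(x^2+1)$, which differs from $\bigl(xf(x^2+1)\bigr)'=f(x^2+1)+2x^2f'(x^2+1)$; no integration by parts reconciles the two. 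The mismatch appears to be a typographical slip in the statement (read $2(x-1)f'$ in place of $2\sqrt{x-1}\,f'$), not a gap in your argument, and you can simply drop the proposed integration-by-parts step.
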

\begin{proof}
For the proof of (a), for $\Gamma$ cocompact and $\varepsilon=0$, see \cite[\S 3.1]{lekkas}. The general case is similar.
For the proof of (b), see \cite[\S 3, \S 8]{voskou}.
\end{proof}
\begin{nnremark}
    It is worth noting that in the case $\varepsilon=1$ the second part of the theorem is trivial, as both sides of the equation are identically $0$. We can see this by considering the automorphism %
    $\gamma \leftrightarrow \gamma'\gamma$, where $\gamma'$ has zero diagonal entries.
\end{nnremark}
\begin{nnremark}
    For $|B(\gamma)|>1$, the quantity $\cosh^{-1}{B(\gamma)}$ is the hyperbolic distance of $l$ from $\gamma \cdot l$. The case $|B(\gamma)| \leq 1$ corresponds to the cases where $l$ and $\gamma \cdot l$ intersect. See for example \cite[Lemma 1]{martin}.
\end{nnremark}
\begin{nnremark}
 It is worth noting that the first relative trace formula does not take into account the sign of $B(\gamma)$, where the second one does. The sign of $B(\gamma)$ specifies the direction of $\gamma \cdot l$ and is importance, for instance, for certain arithmetic applications. For more details, see \cite{voskou}.
\end{nnremark}
\subsection{Summary}
In section \ref{inversionsection}, by writing the series $A^{(0)}_f(z)$ and $A^{(1)}_f(z)$ in terms of integrals of automorphic kernels (Lemma \ref{equivalence}) and, using known properties of the spectral coefficients of such kernels,
we deduce inversion formulas for $f$ in terms of $d^{(0)}_t(f)$ and $d^{(1)}_t(f)$ (Proposition \ref{inversionprop}). We use these inversion formulas to deduce Lemma \ref{keyestim}, a pair of useful upper bounds for $f$ with respect to $d^{(0)}_t(f)$ and $d^{(1)}_t(f)$ respectively. In section \ref{estsection}, we use Lemma \ref{keyestim} to establish some technical estimates that will be used later. In section \ref{pfofsieve}, we follow an argument similar to Chamizo \cite{chamizo1} to finish the proof of Theorem \ref{sievevar}. We use the relative trace formulae from Theorem \ref{modtrace} for particular choices of $d^{(0)}_t(f),d^{(1)}_t(f)$, and the bounds established in section \ref{estsection}. 
\section{Inversion Formulae}
\label{inversionsection}
In this section we explain the relation between the transforms $d_t^{(0)}(f), d_t^{(1)}(f)$ and the Selberg/Harish-Chandra transform. We use this relation to derive inversion formulas for $f$ in terms of $d_t^{(0)}(f)$ and $ d_t^{(1)}(f)$.

Consider the automorphic kernel
$$K(z,w)=\sum_{\gamma \in \Gamma} k(\gamma z,w),$$
where $k(z,w):=k(u(z,w))$ is a function of the fundamental point pair invariant, $$u(z,w)=\frac{|z-w|^2}{4\Im z\Im w}.$$
Define $$F(z \, ;\, \theta \, \vert \, k)=\int_{I\slash \Gamma_1}K(z,e^{i \theta}w) ds(w).$$
We will show that, for $k$ an appropriate transform of $f$, every Huber series $A^{(0)}_f(z)$ can be written in the form $F(z;\, 0 \, \vert \, k)$ and vice versa. Similarly, we show that every series $A^{(1)}_f(z)$ can be written in the form $F'(z;\, 0 \, \vert \, k)$, where the derivative is taken with respect to $\theta$.
\begin{lemma} \label{equivalence}
    For every $z \in \mathbb{H}$, we can relate the series $A_f^{(0)}(z)$ and $A_f^{(1)}(z)$ to integrals of automorphic kernels in the following way.
    \begin{enumerate}
        \item[\textup{(a)}] We have
    \begin{equation} \label{equiv1}  A^{(0)}_{f_0}(z)=F(z\,;\, 0 \, \vert \, k_0), \end{equation}
    where $f_0$ and $k_0$ are continuous, piecewise differentiable functions with exponential decay at infinity related via the formula
\begin{equation} \label{ktof1} f_0( p)=2\int_{\sqrt{p}}^{+\infty}k_0\left(\frac{x-1}{2}\right) \frac{dx}{\sqrt{x^2-p}}, \quad p\geq 1 ,\end{equation}
and, conversely,
\begin{equation} \label{ftok1} k_0(u)=-\frac{2u+1}{\pi} \int_{(2u+1)^2}^{+\infty}f_0'(p)\frac{dp}{\sqrt{p-(2u+1)^2}}. \end{equation}
\item[\textup{(b)}] We have \begin{equation} \label{equiv2} A^{(1)}_{f_1}(z)=F'(z\,;\, 0 \, \vert \, k_1), \end{equation}
    where $f_1$ and $k_1$ are continuous, piecewise differentiable functions with exponential decay at infinity, related via the formula
\begin{equation} \label{ktof2} f_1( p)=-\int_{\sqrt{p}}^{+\infty}k_1'\left(\frac{x-1}{2}\right) \frac{dx}{\sqrt{x^2-p}}, \quad p\geq 1 , \end{equation}
and, conversely,
\begin{equation} \label{ftok2} k_1'(u)=\frac{4u+2}{\pi} \int_{(2u+1)^2}^{+\infty}f_1'(p)\frac{dp}{\sqrt{p-(2u+1)^2}}. \end{equation}
\end{enumerate}
\end{lemma}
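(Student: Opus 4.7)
The plan is to unfold the automorphic kernel against the orbit integral and then compute the resulting integral over $I$ explicitly in Huber coordinates. Since $\Gamma_1$ consists of hyperbolic elements fixing $I$, hence (after conjugation) of diagonal matrices, the fundamental domain $I/\Gamma_1$ has length $\mathrm{len}(l)$, and we may unfold
\begin{equation*}
F(z;\theta\,|\,k) \;=\; \sum_{\gamma\in\Gamma_1\backslash\Gamma} \int_I k\!\bigl(u(\gamma z,e^{i\theta}w)\bigr)\,ds(w).
\end{equation*}
Parametrising $I$ by $w=ie^{\mu}$ (so $ds(w)=d\mu$) and writing $\gamma z = ie^{u+iv}$ in Huber coordinates, a direct computation gives
\begin{equation*}
u(\gamma z, e^{i\theta}ie^{\mu}) \;=\; \frac{\cosh(\mu-u)-\cos v\cos\theta-\sin v\sin\theta}{2\cos v\cos\theta} \;+\; \frac{1-\cos\theta}{\dots}\cdots
\end{equation*}
so that, cleanly, $2u(\gamma z,e^{i\theta}ie^\mu)+1 = \bigl(\cosh(\mu-u)-\sin v\sin\theta\bigr)/(\cos v\cos\theta)$. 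Specialising to $\theta=0$ yields $2u+1=\cosh(\mu-u)/\cos v$, which depends on $z$ only through $1/\cos^2 v(\gamma z)$, and differentiating at $\theta=0$ gives the clean identity $\partial_\theta u|_{\theta=0} = -\tfrac12\tan v(\gamma z)$, again independent of $\mu$.

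For part (a), I substitute $\tau=\mu-u$ and use evenness in $\tau$, then successively change variables $\eta=\cosh\tau/\cos v$ followed by rescaling to arrive at the form
\begin{equation*}
\int_I k_0\!\bigl(u(\gamma z,w)\bigr)\,ds(w) \;=\; 2\int_{\sqrt{p}}^{+\infty} k_0\!\left(\tfrac{x-1}{2}\right)\frac{dx}{\sqrt{x^2-p}}, \qquad p=\tfrac{1}{\cos^2 v(\gamma z)}.
\end{equation*}
This is exactly $f_0(p)$ by (\ref{ktof1}), and summing over $\gamma\in\Gamma_1\backslash\Gamma$ gives (\ref{equiv1}). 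For part (b), the derivative in $\theta$ pulls out the factor $\tan v(\gamma z)$ (which produces the extra tangent present in $A_f^{(1)}$) and turns $k_1$ into $k_1'$, and the same sequence of substitutions identifies the remaining $\mu$-integral with $-f_1(p)$ as defined by (\ref{ktof2}); summing yields (\ref{equiv2}).

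The inversion formulae (\ref{ftok1}) and (\ref{ftok2}) are then obtained by recognising the right-hand sides of (\ref{ktof1}), (\ref{ktof2}) as classical Abel transforms. Substituting $y=x^2$ recasts (\ref{ktof1}) as $f_0(p)=\int_{p}^{\infty}h(y)(y-p)^{-1/2}\,dy$ with $h(y)=k_0((\sqrt{y}-1)/2)/\sqrt{y}$; Abel inversion gives $h(y)=-\tfrac{1}{\pi}\int_{y}^{\infty}f_0'(p)(p-y)^{-1/2}dp$, and setting $\sqrt{y}=2u+1$ yields (\ref{ftok1}). The same argument, applied to $k_1'$ in place of $k_0$, produces (\ref{ftok2}), the extra factor of $2$ arising from differentiating $(2u+1)^2$ inside the outer variable.

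The main obstacle is essentially bookkeeping: carrying out the Huber-coordinate computation of $u(\gamma z,e^{i\theta}w)$ and verifying that $\partial_\theta u|_{\theta=0}$ is independent of $\mu$, so that the derivative factors cleanly into a \emph{product} of $\tan v(\gamma z)$ and an integral depending on $z$ only through $v(\gamma z)$; without this independence one would not obtain a series of the form $A_f^{(1)}$. The Abel-inversion step is standard provided that $f_0,f_1$ decay sufficiently fast at infinity, which is built into the hypotheses.
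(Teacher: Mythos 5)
Your proposal is correct and follows essentially the same route as the paper: unfold the kernel over $\Gamma_1\backslash\Gamma$, compute $u(\gamma z,e^{i\theta}w)$ explicitly (the paper does this in Cartesian rather than Huber coordinates, which is equivalent), observe that $\left.\partial_\theta u\right|_{\theta=0}=-\tfrac12\tan v(\gamma z)$ is independent of the parameter on $I$, reduce the orbital integral to the one-variable transforms (\ref{ktof1}) and (\ref{ktof2}), and recover $k_0$, $k_1'$ by Abel/Weyl inversion. The only quibble is that the factor $2$ in (\ref{ftok2}) relative to (\ref{ftok1}) comes from the coefficient $-1$ (versus $2$) in (\ref{ktof2}), not from ``differentiating $(2u+1)^2$,'' but this does not affect the argument.
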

\begin{proof}
We have:
\begin{eqnarray}F(z \, ;\, \theta \, \vert \, k)&=&\sum_{\gamma \in \Gamma_1 \backslash \Gamma} \sum_{ \gamma_0 \in \Gamma_1}\int_{I \slash \Gamma_1}k(u(\gamma z, \gamma_0^{-1}e^{i\theta}w)) ds(w) \nonumber \\
&=&\sum_{\gamma \in \Gamma_1 \backslash \Gamma} \int_{I}k(u(\gamma z,e^{i\theta}w)) ds(w)=\sum_{\gamma \in \Gamma_1 \backslash \Gamma} \tilde{k}(\gamma z;\, \theta), \nonumber \end{eqnarray}
where
$$\tilde{k}( z; \, \theta):=\int_{I}k(u(z,e^{i\theta}w)) ds(w)=\int_{0}^{+\infty}k(u(z,ie^{i\theta}t)) \frac{dt}{t}.$$
Writing $z=x+iy$, we have
$$u(z,ie^{i\theta}t)=\frac{(x+t\sin{\theta})^2+(y-t\cos{\theta})^2}{4yt\cos{\theta}}=\left(\frac{x^2+y^2}{y^2} \cdot \frac{y}{4t}+\frac{t}{4y}\right)\cdot \sec{\theta}+\frac{x}{2y}\tan{\theta}-\frac{1}{2}.$$
In particular,
$$u(z,it)=\frac{x^2+y^2}{y^2} \cdot \frac{y}{4t}+\frac{t}{4y}-\frac{1}{2}, \quad \left.\frac{\partial}{\partial \theta}u(z,ie^{i\theta}t)\right|_{\theta=0}=\frac{x}{2y}=-\frac{1}{2}\tan{v(\gamma)}.$$
For (a), setting $r=t/y$, we have
$$\tilde{k}( z; \, 0)=\int_{0}^{+\infty}k\left(\frac{p}{4r}+\frac{r}{4}-\frac{1}{2} \right) \frac{dr}{r},$$ 
where $p=p(z)=(x^2+y^2)/y^2=1/\cos^2{v(z)}$. Hence, for
$f_0(p):=\tilde{k}_0( z; \, 0),$ we have

 $$A^{(0)}_{f_0}(z)=F(z\,;\, 0 \, \vert \, k_0).$$
 We are now left to verify the conversion formulae (\ref{ktof1}) and (\ref{ftok1}).

Under the change of variables $u=(p/2r+r/2)^2$ and $x=\sqrt{u}$ we get
$$f_0( p)=\int_{p}^{+\infty}k_0\left(\frac{\sqrt{u}-1}{2} \right) \frac{du}{\sqrt{u}\sqrt{u-p}}=2\int_{\sqrt{p}}^{+\infty}k_0\left(\frac{x-1}{2}\right) \frac{dx}{\sqrt{x^2-p}},$$
as required.
Using the inversion formula for Weyl integrals from \cite[Eq. 1.64,1.62]{iwaniec},
we have $$u^{-1/2} \cdot k_0\left(\frac{\sqrt{u}-1}{2} \right)=-\frac{1}{\pi}\int_{u}^{+\infty}f_0'(v)\frac{dv}{\sqrt{v-u}}.$$
In other words,
$$k_0(u)=-\frac{2u+1}{\pi} \int_{(2u+1)^2}^{+\infty}f_0'(v)\frac{dv}{\sqrt{v-(2u+1)^2}}. $$

For part (b), setting once again $r=t/y$, we have
$$\left.\frac{\partial}{\partial \theta}\tilde{k}( z; \theta)\right|_{\theta=0}=-\frac{1}{2} \tan{v(z)}\int_{0}^{+\infty}k'\left(\frac{p}{4r}+\frac{r}{4}-\frac{1}{2} \right) \frac{dr}{r}.$$
Hence, for $$f_1(p):=-\frac{1}{2}\int_{0}^{+\infty}k_1'\left(\frac{p}{4r}+\frac{r}{4}-\frac{1}{2} \right) \frac{dr}{r},$$ we have
 $$A^{(1)}_{f_1}(z)=F'(z\,;\, 0 \, \vert \, k_1).$$

 The conversion formulae (\ref{ktof2}) and (\ref{ftok2}) follow in a similar manner as in part (a).

\end{proof}
The spectral expansion of $F(z\,;\, 0 \, \vert \, k)$ and $F'(z\,;\, 0 \, \vert \, k)$ can be easily deduced from the well-known spectral expansion of $K(z,w)$ (see \cite[(7.17)]{iwaniec}). In particular,
let $h_k(t)$ be the Selberg/Harish-Chandra transform of $k$ (see \cite[Eq. 1.62]{iwaniec}). 
We have that
\begin{eqnarray} & F(z\,;\, 0 \, \vert \, k)=&\sum_{j}h_k(t_j)\hat{u}_{0,j}u_{0,j}(z)-\sum_{\mathfrak{a}}\frac{i}{4\pi}\int_{\left(1/2\right)}h_k(t)\hat{E}_{\mathfrak{a},0}\left(s\right)\overline{E_{\mathfrak{a},0}\left(z,s\right)}ds,\nonumber \\ \label{fkspectral2} & F'(z\,;\, 0 \, \vert \, k)=&\sum_{j}\sqrt{\lambda_j}h_k(t_j)\hat{u}_{1,j}u_{0,j}(z)-\sum_{\mathfrak{a}}\frac{i}{4\pi}\int_{\left(1/2\right)}  \sqrt{s(1-s)} \cdot h_k(t)\hat{E}_{\mathfrak{a},1}\left(s\right) \overline{E_{\mathfrak{a},0}\left(z,s\right)}ds. \nonumber
\end{eqnarray}
We combine these spectral formulas with Theorem \ref{spectralexpthm} and equate the spectral expansions of the two sides of the equations (\ref{equiv1}) and (\ref{equiv2}) from Lemma \ref{equivalence}. We then use the inversion formula for $h_k$ (see \cite[Eq. 1.64]{iwaniec}) to prove the following inversion formulae for $d_t^{(0)}(f)$ and $d_t^{(1)}(f)$.
\begin{proposition} \label{inversionprop}
    Let $f$ a continuous, piecewise differentiable function with exponential decay at infinity, $d_{t}^{(0)}(f)$ and $d_{t}^{(1)}(f)$ as in equations (\ref{d0def}) and (\ref{d1def}) respectively, and
    \begin{equation} \label{defofi}I(W,R):= \frac{-2\sqrt{2}}{\pi} \int_{0}^{1}\frac{\left( 2W+(R-W)y \right)^{-1/2}}{\sqrt{y(1-y)}} dy.
\end{equation}
We can recover $f$ from $d_{t}^{(0)}(f)$ and $d_{t}^{(1)}(f)$ in the following way:
    \begin{enumerate}
\item[\textup{(a)}] The function $f$ can be written in the form
    
    \begin{equation} \label{inversion1}f(\cosh^2{w})=\int_{w}^{+\infty} \omega'(\rho)I(\cosh{w},\cosh{\rho}) d\rho, \end{equation}
where \begin{equation}\label{omegadef}\omega(\rho)=\frac{1}{2\pi} \int_{-\infty}^{+\infty}e^{i \rho t}d^{(0)}_t(f)dt. \end{equation}
\item[\textup{(b)}] The function $f$ can be written in the form \begin{equation} \label{inv1} f( \cosh^2{w})=\int_{w}^{+\infty} \kappa'(\rho)I(\cosh{w},\cosh{\rho})d\rho ,\end{equation}
where
\begin{equation} \label{kappadef}\kappa(\rho)=\frac{i\left(\sinh{\rho}\right)^{-1}}{\pi} \int_{-\infty}^{+\infty}e^{i \rho t}td^{(1)}_t(f)dt. \end{equation}
\end{enumerate}
\end{proposition}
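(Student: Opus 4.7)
The plan is to derive both inversion formulas by exploiting the two different spectral expansions available for the same automorphic object. On one hand, Lemma \ref{equivalence} identifies the Huber series $A^{(0)}_{f}(z)$ with the integrated kernel $F(z;0\,|\,k_0)$, whose spectral expansion is inherited from the classical expansion of $K(z,w)$ and involves the Selberg/Harish-Chandra transform $h_{k_0}(t)$ as recorded just after Lemma \ref{equivalence}. On the other hand, Theorem \ref{spectralexpthm} gives the spectral expansion of $A^{(0)}_{f}(z)$ directly in terms of $d^{(0)}_t(f)$. Matching coefficients term-by-term in both the discrete and continuous parts forces the clean identification $h_{k_0}(t) = 2\, d^{(0)}_t(f)$. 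An entirely analogous comparison using Lemma \ref{equivalence}(b) and Theorem \ref{spectralexpthm}(b) yields $h_{k_1}(t) = 2\, d^{(1)}_t(f)$.

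With these identifications in hand, the task reduces to composing two known inversions: the Selberg/Harish-Chandra inversion, which recovers $k$ from $h_k$, followed by the inversions (\ref{ftok1}) or (\ref{ftok2}) from Lemma \ref{equivalence}, which recover $f$ from $k$. For part (a), I would first invert the Fourier step of the Selberg transform applied to $h_{k_0}/2$, producing the auxiliary function $\omega(\rho) = \frac{1}{2\pi}\int_{-\infty}^{+\infty} e^{i\rho t}\, d^{(0)}_t(f)\,dt$. I would then substitute the resulting expression for $k_0$ into (\ref{ktof1}), swap the order of integration via Fubini, and carry out the change of variables taking the natural Selberg parameter into $\rho$. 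The double integral collapses into the form $\int_w^{+\infty} \omega'(\rho)\, I(\cosh w,\cosh\rho)\, d\rho$, with the kernel $I(W,R)$ of (\ref{defofi}) emerging as an explicit beta-type integral of shape $\int_0^1 y^{-1/2}(1-y)^{-1/2}(\cdots)^{-1/2}\,dy$ after a final substitution parametrising the inner Abel-inversion interval.

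Part (b) follows the same template with two modifications. First, since both (\ref{ktof2}) and (\ref{ftok2}) involve $k_1'$ rather than $k_1$, one must work with the derivative form of the Selberg inversion; the extra factor of $t$ in (\ref{kappadef}) is precisely the factor that arises from differentiating $e^{i\rho t}$ through the Fourier step. Second, the Jacobian of the change of variables from the natural Selberg parameter to $\rho$ introduces the factor $(\sinh\rho)^{-1}$ seen in the definition of $\kappa(\rho)$, and a careful check of signs and factors of $i$ fixes the remaining constants. After these adjustments, the Fubini and substitution steps run exactly as in part (a), producing the same kernel $I(\cosh w, \cosh\rho)$ and hence formula (\ref{inv1}).

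The main technical obstacle is executing the nested change of variables cleanly enough to isolate the universal kernel $I(W,R)$. One must carefully track the effect of the substitution arising from the Selberg inversion (typically $v = \sinh^2(\rho/2)$ or a variant) inside the Abel-type integral, and then re-express the result in the hyperbolic variables $w,\rho$ so that the inner integral assumes the explicit beta-shape in (\ref{defofi}). Aside from this geometric bookkeeping, the remaining effort is mechanical: tracking the factor of $2$ between $h_k$ and $2\,d^{(0)}_t(f)$, the derivative that turns $\omega$ into $\omega'$, and the sign and $i$ conventions distinguishing parts (a) and (b).
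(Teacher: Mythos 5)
Your proposal is correct and follows essentially the same route as the paper: equating spectral coefficients of the two expansions of $A^{(0)}_f$ and $A^{(1)}_f$ to obtain $h_{k_0}(t)=2d^{(0)}_t(f)$ and $h_{k_1}(t)=2d^{(1)}_t(f)$, then composing the Selberg/Harish-Chandra inversion (in its derivative form for part (b)) with the $k\mapsto f$ formulas of Lemma \ref{equivalence}, and extracting the kernel $I(W,R)$ by Fubini and the substitution $x=W+(R-W)y$. Your account of where the factor $t$ and the Jacobian factor $(\sinh\rho)^{-1}$ in $\kappa$ come from matches the paper's computation $\kappa(\rho)=q'(\sinh^2(\rho/2))=\tfrac{2}{\sinh\rho}\bigl(q(\sinh^2(\rho/2))\bigr)'$; the only slip is a one-off mislabelling of (\ref{ktof1})/(\ref{ktof2}) as (\ref{ftok1})/(\ref{ftok2}), which you correct in the following paragraph.
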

\begin{proof}
Using the inversion formula for $h_k$ (see \cite[Eq. 1.64]{iwaniec}), we have
\begin{equation} \label{kqinversion}k(u)=- \frac{1}{\pi}\int_{u}^{+\infty}\frac{q'(v)}{\sqrt{v-u}}dv, \end{equation}
where $$q(\sinh^2{(\rho/2)}):=\frac{1}{4\pi} \int_{-\infty}^{+\infty}e^{i\rho t}h(t)dt.$$
Using the substitution $v=\sinh^2{\left(\rho/2\right)}$, we rewrite this as
$$k\left(\frac{x-1}{2}\right)=- \frac{\sqrt{2}}{2\pi}\int_{\cosh^{-1}{x}}^{+\infty}\frac{\sinh{\rho} \cdot q'\left(\sinh^2{(\rho/2)}\right)}{\sqrt{\cosh{\rho}-x}}d\rho.$$

For part (a),
take $f_0=f$ and $k_0$ as in Eq.(\ref{ftok1}). 
Equating the spectral coefficients of both sides of  Eq.(\ref{equiv1}), we deduce that
$$h_{k_0}(t)=2d^{(0)}_{t}(f),$$
and, hence,
$$k_0\left(\frac{x-1}{2}\right)=- \frac{\sqrt{2}}{\pi}\int_{\cosh^{-1}{x}}^{+\infty}\frac{\omega'(\rho)}{\sqrt{\cosh{\rho}-x}}d\rho,$$
where $\omega(\rho)$ is given by Eq. (\ref{omegadef}).
We now substitute into Eq. (\ref{inversion1}), giving
$$f(p)=2\int_{\sqrt{p}}^{+\infty}k\left(\frac{x-1}{2}\right) \frac{dx}{\sqrt{x^2-p}}=\frac{-2\sqrt{2}}{\pi} \int_{\sqrt{p}}^{+\infty} \int_{\cosh^{-1}{x}}^{+\infty}\frac{\omega'(\rho)}{\sqrt{\cosh{\rho}-x}}d\rho \cdot \frac{dx}{\sqrt{x^2-p}}.$$
Changing the order of integration and letting $p=\cosh^2w$, we have
\begin{equation} f(\cosh^2{w})=\int_{w}^{+\infty} \omega'(\rho)I(\cosh{w},\cosh{\rho}) d\rho, \nonumber \end{equation}
where, for $W<R$, $$I(W,R)= \frac{-2\sqrt{2}}{\pi} \int_{W}^{R}\frac{1}{\sqrt{\left(R-x\right)\left(x^2-W^2\right)}} dx.$$
Using the substitution $x=W+(R-W)y$, we have $$I(W,R)=\frac{-2\sqrt{2}}{\pi} \int_{0}^{1}\frac{\left( 2W+(R-W)y \right)^{-1/2}}{\sqrt{y(1-y)}} dy,$$
as required. For the case $W=R$, we define $I(W,W)$ by continuity.

For part (b), we start by differentiating Eq. (\ref{kqinversion}) in the following manner. Using integration by parts on Eq. (\ref{kqinversion}), we have
$$k(u)= \frac{2}{\pi}\int_{u}^{+\infty}q''(v)\sqrt{v-u} \, dv.$$
Hence, by Leibniz integral rule,
$$k'(u)=- \frac{1}{\pi}\int_{u}^{+\infty}\frac{q''(v)}{\sqrt{v-u}}dv,$$
which after the substitution $v=\sinh^2{(\rho/2)}$ and $u=(x-1)/2$ becomes:
$$k'\left(\frac{x-1}{2}\right)=- \frac{\sqrt{2}}{2\pi}\int_{\cosh^{-1}{x}}^{+\infty}\frac{\sinh{\rho} \cdot q''\left(\sinh^2{(\rho/2)}\right)}{\sqrt{\cosh{\rho}-x}}d\rho.$$
Take $f_1=f$ and $k_1$ as defined in Eq. (\ref{ftok2}). Equating the spectral coefficients of both sides of Eq.(\ref{equiv2}), we deduce that
$$h_{k_1}(t)=2d^{(1)}_{t}(f),$$
and hence, similarly with part (a), we have
\begin{equation} f(\cosh^2{w})=\int_{w}^{+\infty} \kappa'(\rho)I(\cosh{w},\cosh{\rho}) d\rho, \nonumber \end{equation}
where $$\kappa(\rho)=q'(\sinh^2{(\rho/2)})=\frac{2}{\sinh{\rho}}\left(q(\sinh^2{(\rho/2)})\right)'=\frac{i\left(\sinh{\rho}\right)^{-1}}{2\pi} \int_{-\infty}^{+\infty}e^{i \rho t}td^{(1)}_t(f)dt,$$ as in Eq. (\ref{kappadef}).

\end{proof}
\begin{remark}
    Using the integral representation of the hypergeometric function (see \cite[9.111]{toisap}), we can show that $I(W,R)$ can be written in terms of a hypergeometric function. In particular,
    $$I(W,R)=-2W^{-1/2} \cdot {}_2F_1\left(\frac{1}{2},\frac{1}{2}; \, 1; \, \frac{W-R}{2W}\right).$$
\end{remark}

We conclude this section by using the inversion formulae from Proposition \ref{inversionprop}, to derive the following estimates.
\begin{lemma} \label{keyestim}
    Let $f$, $\omega$ and $\kappa$ as in Proposition \ref{inversionprop}. Furthermore, let $\tau(w):=\kappa(w) \sinh{w}$. Assume that, as $w$ tends to infinity, both  $\tau(w)$ and $\omega(w)$ tend to zero. Then, for every $w\geq0$, we have
    \begin{fleqn}[\parindent]
    \begin{alignat*}{7}
    \textup{(a)} \qquad  \displaystyle &&  
          f(\cosh^2{w})  &\ll \quad& &
|\omega(w)|+ \int_{w}^{+\infty} |\omega(\rho)| d\rho, &&
\\
\textup{(b)} \qquad  \displaystyle &&   \left(\sinh{w}\cdot f\left(\cosh^2{w}\right)\right)'&\ll \quad& &|\tau'(w)|+|\tau(w)|
+\int_{w}^{+\infty}|\tau(\rho)| d \rho. && \nonumber
    \end{alignat*}
    \end{fleqn}
\end{lemma}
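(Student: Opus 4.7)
The plan is to reduce both inequalities to an integration-by-parts argument applied to the representations (\ref{inversion1}) and (\ref{inv1}), after establishing uniform pointwise bounds on $I$ and its relevant partial derivatives. Substituting $y=\sin^2\phi$ in the definition (\ref{defofi}) rewrites
$$I(W,R) = -\frac{4\sqrt{2}}{\pi} \int_{0}^{\pi/2} \frac{d\phi}{\sqrt{W(1+\cos^2\phi)+R\sin^2\phi}},$$
and differentiating under the integral sign produces analogous expressions for $\partial_R I$ and $\partial_W\partial_R I$. The key estimates I want, valid for $W=\cosh w\geq 1$ and $R=\cosh\rho\geq W$, are
$$|I(\cosh w,\cosh\rho)|\ll 1,\quad \sinh\rho\cdot|\partial_R I|\ll 1,\quad \cosh w\cdot|\partial_R I|\ll 1,\quad \cosh^2 w\cdot|\partial_W\partial_R I|\ll 1.$$
The first, third, and fourth are immediate from the trivial lower bound $W(1+\cos^2\phi)+R\sin^2\phi\geq W$ together with $W\geq 1$; only the second genuinely requires balancing $W$ against $R$, which I would handle by splitting the $\phi$-integral according to whether $W(1+\cos^2\phi)$ or $R\sin^2\phi$ dominates in the denominator.

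For part (a), I integrate (\ref{inversion1}) by parts in $\rho$. The boundary contribution at $+\infty$ vanishes by the hypothesis $\omega(\rho)\to 0$ and the boundedness of $I$, leaving
$$f(\cosh^2 w) = -\omega(w)\,I(\cosh w,\cosh w) - \int_{w}^{+\infty}\omega(\rho)\,\partial_\rho I(\cosh w,\cosh\rho)\, d\rho.$$
Since $\partial_\rho I=\sinh\rho\cdot\partial_R I$, the first and second uniform estimates immediately yield $|f(\cosh^2 w)|\ll|\omega(w)|+\int_{w}^{+\infty}|\omega(\rho)|\,d\rho$.

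For part (b), the analogous integration by parts in (\ref{inv1}), combined with $\kappa(\rho)=\tau(\rho)/\sinh\rho$ and $\partial_\rho I=\sinh\rho\cdot\partial_R I$, produces the key cancellation $\kappa(\rho)\partial_\rho I=\tau(\rho)\partial_R I$. Multiplying by $\sinh w$, so that the boundary value $\sinh w\cdot\kappa(w)$ collapses to $\tau(w)$, yields
$$\sinh w\cdot f(\cosh^2 w) = \frac{2\tau(w)}{\sqrt{\cosh w}} - \sinh w\int_{w}^{+\infty}\tau(\rho)\,\partial_R I(\cosh w,\cosh\rho)\, d\rho.$$
Differentiating in $w$ produces three contributions: the derivative of the leading fraction, bounded by $|\tau'(w)|+|\tau(w)|$; a Leibniz boundary term at $\rho=w$ of size $\sinh w\cdot|\tau(w)|\cdot|\partial_R I(\cosh w,\cosh w)|\ll|\tau(w)|$; and two integral terms with prefactors $\cosh w$ and $\sinh^2 w$, which are absorbed exactly by the third and fourth uniform bounds to give $\ll\int_{w}^{+\infty}|\tau(\rho)|\,d\rho$. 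The hard part is the second uniform estimate $\sinh\rho\cdot|\partial_R I|\ll 1$: the integrand in the definition of $\partial_R I$ has competing singularities at $\phi=0$ and $\phi=\pi/2$ whose relative strengths depend on $W$ and $R$, so the $\phi$-splitting and the accounting of the $\sinh\rho$ factor must be done carefully to obtain a bound uniform in both variables.
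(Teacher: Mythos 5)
Your proposal is correct and follows essentially the same route as the paper: integration by parts in $\rho$ on the representations (\ref{inversion1}) and (\ref{inv1}), evaluation of the boundary term via $I(W,W)=-2W^{-1/2}$, and differentiation under the integral sign controlled by bounds on $\partial_R I$ and $\partial_W\partial_R I$ (your uniform estimates $\sinh\rho\,\lvert\partial_R I\rvert\ll 1$, $\cosh w\,\lvert\partial_R I\rvert\ll 1$, $\cosh^2 w\,\lvert\partial_W\partial_R I\rvert\ll 1$ are exactly equivalent to the paper's $\partial_R I\ll R^{-1}W^{-1/2}$, $\partial_R I\ll W^{-3/2}$, $\partial_W\partial_R I\ll W^{-5/2}$). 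The only differences are cosmetic: the substitution $y=\sin^2\phi$ in (\ref{defofi}), and the fact that your ``hard'' second estimate is actually immediate from $R\sin^2\phi\le W(1+\cos^2\phi)+R\sin^2\phi$, so no $\phi$-splitting is needed.
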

\begin{proof}
For part (a), applying integration by parts on Eq. (\ref{inversion1}), we have
$$ f(\cosh^2{w})=-\omega(w)I(\cosh{w},\cosh{w})-\int_{w}^{+\infty} \omega(\rho)\sinh{\rho}\frac{\partial I}{\partial R}(\cosh{w},\cosh{\rho}) d\rho . $$
On the other hand, we have

\begin{equation}\label{ibound}I(W,R)= \frac{-2\sqrt{2}}{\pi} \int_{0}^{1}\frac{\left( 2W+(R-W)y \right)^{-1/2}}{\sqrt{y(1-y)}} dy \ll  \int_{0}^{1}\frac{W^{-1/2}}{\sqrt{y(1-y)}} dy \ll W^{-1/2}, \end{equation}
where we used the fact that $$\int_{0}^{1}\frac{1}{\sqrt{y(1-y)}} dy =\pi < +\infty.$$
By differentiating Eq.(\ref{defofi}) with respect to $R$ and proceeding in a similar manner, we also have
\begin{equation} \label{iderivbound}\frac{\partial I}{\partial R}(W,R)=\frac{\sqrt{2}}{\pi} \int_{0}^{1}y \cdot \frac{\left( 2W+(R-W)y \right)^{-3/2}}{\sqrt{y(1-y)}} dy \ll \int_{0}^{1}y \cdot \frac{W^{-3/2}}{\sqrt{y(1-y)}} dy \ll W^{-3/2}. \end{equation}
On the other hand,
\begin{eqnarray} \label{iderivbound2}\frac{\partial I}{\partial R}(W,R)&=&\frac{\sqrt{2}}{\pi} \int_{0}^{1}y \cdot \frac{\left( 2W+(R-W)y \right)^{-3/2}}{\sqrt{y(1-y)}} dy \nonumber \\ &\ll& \int_{0}^{1}\frac{y}{2W+(R-W)y} \cdot \frac{W^{-1/2}}{\sqrt{y(1-y)}} dy \ll (R-W)^{-1} W^{-1/2}. \end{eqnarray}
Combining eq. (\ref{iderivbound}) with eq. (\ref{iderivbound2}), we have
$$\frac{\partial I}{\partial R}(W,R) \ll R^{-1} W^{-1/2}. $$
 We deduce $$f(\cosh^2{w}) \ll
|\omega(w)|+ \int_{w}^{+\infty} |\omega(\rho)| d\rho
,$$ as required.

For part (b), applying integration by parts on Eq.(\ref{inv1}), and noting that
$$I(W,W)= \frac{-2\sqrt{2}}{\pi} \int_{0}^{1}\frac{\left( 2W \right)^{-1/2}}{\sqrt{y(1-y)}} dy =\frac{-2W^{-1/2}}{\pi} \int_{0}^{1}\frac{1}{\sqrt{y(1-y)}} dy =-2W^{-1/2},$$ 
we have
    $$\sinh{w}\cdot f\left(\cosh^2{w}\right)=-\frac{2}{\sqrt{\cosh{w}}}\cdot \tau(w)-\sinh{w}\int_{w}^{+\infty}\tau(\rho)\cdot \frac{\partial I}{\partial R}\left(\cosh{w}, \cosh{\rho}\right) d\rho.$$
    Therefore,
    \begin{eqnarray}\frac{d}{dw}\left(\sinh{w}\cdot f\left(\cosh^2{w}\right)\right)=&&\frac{\tanh{w}}{\sqrt{\cosh{w}}} \cdot \tau(w) -\frac{2}{\sqrt{\cosh{w}}}\cdot \tau'(w)\nonumber \\
    &-& \cosh{w}\int_{w}^{+\infty}\tau(\rho)\cdot \frac{\partial I}{\partial R}\left(\cosh{w},\cosh{\rho}\right) d\rho \nonumber \\
    &+&\sinh{w} \cdot \tau(w)\cdot \frac{\partial I}{\partial R}\left(\cosh{w},\cosh{w}\right) \nonumber \\
    &-& \sinh^2{w}\int_{w}^{+\infty}\tau(\rho)\cdot \frac{\partial^2 I}{\partial W\partial R}\left(\cosh{w},\cosh{\rho}\right) d\rho. \label{longexpression}
    \end{eqnarray}
    Differentiating Eq.(\ref{iderivbound}), we have 
    \begin{equation} \label{idderivbound}\frac{\partial^2 I}{\partial W \partial R}(W,R)=\frac{-3\sqrt{2}}{2\pi} \int_{0}^{1}y(2-y)\cdot \frac{\left( 2W+(R-W)y \right)^{-5/2}}{\sqrt{y(1-y)}} dy \ll W^{-5/2}. 
    \end{equation}
    Applying Eq. (\ref{iderivbound}) and Eq. (\ref{idderivbound}) into Eq. (\ref{longexpression}), we deduce that
    \begin{eqnarray}\frac{d}{dw}\left(\sinh{w}\cdot f\left(\cosh^2{w}\right)\right) \ll && \left(\cosh{w}\right)^{-1/2} \left( |\tau(w)|+|\tau'(w)|\right) \nonumber \\ &+&\cosh{w}\int_{w}^{+\infty}|\tau(\rho)|\left(\cosh{w}\right)^{-3/2}d\rho
    \nonumber \\
    &+&\cosh{w}\cdot |\tau(w)| \cdot (\cosh{w})^{-3/2} \nonumber \\
&+&\left(\cosh{w}\right)^2\int_{w}^{+\infty}|\tau(\rho)|\left(\cosh{w}\right)^{-5/2}d\rho, \nonumber
    \end{eqnarray}
    and, hence:
    $$\frac{d}{dw}\left(\sinh{w}\cdot f\left(\cosh^2{w}\right)\right) \ll  |\tau'(w)|+|\tau(w)|
+\int_{w}^{+\infty}|\tau(\rho)| d\rho.$$

\end{proof}
\section{Estimates for $f$} \label{estsection}
We now use Lemma \ref{keyestim} to establish some bounds for $f_0$ and $f_1$, for particular choices of $d^{(0)}_{t}(f_0)$ and $d^{(1)}_{t}(f_1)$ that will be important in section \ref{pfofsieve}. In particular,  the choices of the transforms are such that $d^{(0)}_{t}(f_0)$ and $(1/4+t^2)d^{(1)}_{t}(f_1)$ are smoothings of the indicator of $[-T,T]$ times the oscillation factor $\cos{rt}$. This is analogous to \cite[Lemma 3.1]{chamizo1}. 
\begin{lemma} \label{estlemma}.
    Let $T,r$ be positive with $T\ge 1$ and $r \ll 1$. Let $f_0, \; f_1$ be functions such that \begin{eqnarray}&d^{(0)}_{t}(f_0)=&e^{-t^2/4T^2}\cos(rt), \nonumber \\ 
    &d^{(1)}_{t}(f_1)=&\frac{1}{2t^2}e^{-t^2/4T^2}\left(1-e^{-t^2/4}\right)\cos\left(rt\right). \nonumber \end{eqnarray}
    The following inequalities hold:
\begin{enumerate}
    \item[\textup{(a)}]
    \begin{enumerate}
        \item[\textup{i)}] For $f_0$, we have
        $$f_0(\cosh^2{w}) \ll \left\{\begin{array}{ll} T 
    \cdot e^{-T^2(w-r)^2} +1, & \hbox{ for }\quad w\leq 2r, \\  T 
\cdot e^{-T^2(w-r)^2}, & \text{ for } \quad w \ge 2r. \end{array}\right.$$
In particular,
\begin{equation} \label{estimatef0of1} f_0(1) \ll T e^{-T^2r^2} +1. 
\end{equation}
        \item[\textup{ii)}] For $f_1$, we have
    $$ \left(\sinh{w}\cdot f_1(\cosh^2{w})\right)' \ll \left\{\begin{array}{ll} T 
    \cdot e^{-T^2(w-r)^2} +1, & \hbox{ for }\quad w\leq 2r, \\  T 
\cdot e^{-T^2(w-r)^2}+e^{-2(w-r)^2/3},  & \text{ for } \quad w \ge 2r . \end{array}\right.$$
In particular,
\begin{equation} \label{estimatef1of1} f_1(1) \ll T e^{-T^2r^2} +1. 
\end{equation}
    \end{enumerate}

    \item[\textup{(b)}] Moreover, we can show that, for $\sinh^{-1}u\ge2r$,
\begin{fleqn}[\leftmargin]
   \begin{alignat}{3}
       \textup{i)}&& \qquad & \displaystyle 
        \int_{u}^{\infty} \frac{f_0(x^2+1)}{\sqrt{x^2-u^2}}dx &{}\ll{}& {} Te^{-T^2r^2/2} \cdot u^{-2} \:, 
    \label{intf0large} \\
    \textup{ii)}&& \qquad & \displaystyle 
        \int_{u}^{\infty} \frac{\left(xf_1(x^2+1)\right)'}{\sqrt{x^2-u^2}}dx &{}\ll{}& {} \left(Te^{-T^2r^2/2}+e^{-r^2/2}\right) \cdot u^{-2} .
    \label{intf1large}
   \end{alignat}
   \end{fleqn}
    \item[\textup{(c)}] Furthermore, for $u \gg 1$, we have:
    \begin{fleqn}[\leftmargin]
    \begin{alignat}{4}
    &{}&\textup{i)}&& \qquad & \displaystyle
    \int_{0}^{\infty} \frac{f_0(x^2+1)}{\sqrt{x^2+u^2}}dx &{}\ll{}& {} 1, \label{geometricsmallf0} \\
   &{}&\textup{ii)}&& \qquad & \displaystyle
     \int_{0}^{\infty} \frac{\left(xf_1(x^2+1)\right)'}{\sqrt{x^2+u^2}}dx  &{}\ll{}& {} 1. \label{geometricsmallf1} 
    \end{alignat}
    \end{fleqn}
\end{enumerate}

\end{lemma}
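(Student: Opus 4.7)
The approach is to apply Lemma \ref{keyestim} directly for part (a), after computing the inverse Fourier integrals $\omega(\rho)$ and $\tau(\rho)=\kappa(\rho)\sinh\rho$ from Proposition \ref{inversionprop}. Parts (b) and (c) then reduce to integrals that can be bounded using the part (a) estimates via the substitution $x=\sinh w$, $x^2+1=\cosh^2 w$.

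For (a)(i), the standard Gaussian Fourier inversion yields
\[
\omega(\rho) = \frac{T}{2\sqrt{\pi}}\bigl(e^{-T^2(\rho-r)^2}+e^{-T^2(\rho+r)^2}\bigr).
\]
Substituting into Lemma \ref{keyestim}(a), the pointwise term $|\omega(w)|$ contributes $Te^{-T^2(w-r)^2}$. The tail $\int_w^{+\infty}|\omega(\rho)|d\rho$ is bounded uniformly by $\|\omega\|_{L^1(\mathbb{R})}\ll 1$, which accounts for the $+1$ summand when $w\leq 2r$; for $w\geq 2r$, the Gaussian tail $\int_{T(w-r)}^\infty e^{-s^2}ds$ is absorbed into $Te^{-T^2(w-r)^2}$ (any $+1$ loss is harmless since, if $T(w-r)$ is bounded, $Te^{-T^2(w-r)^2}\gg 1$). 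Setting $w=0$ gives \eqref{estimatef0of1}.

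For (a)(ii), combining the formula \eqref{kappadef} for $\kappa$ with the identity
\[
\int_{-\infty}^{+\infty}\frac{\sin(at)}{t}e^{-bt^2}dt = \pi\operatorname{erf}\!\bigl(a/(2\sqrt{b})\bigr),
\]
applied to the two Gaussians $e^{-t^2/4T^2}$ and $e^{-t^2(1+T^2)/(4T^2)}$ arising from the factor $1-e^{-t^2/4}$, yields
\[
\tau(\rho) = -\tfrac{1}{4}\sum_{\pm}\bigl[\operatorname{erf}((\rho\pm r)T)-\operatorname{erf}((\rho\pm r)T_*)\bigr],\qquad T_*:=T/\sqrt{1+T^2}.
\]
Differentiation produces two Gaussian scales: one of width $\sim 1/T$, contributing $Te^{-T^2(w-r)^2}$, and one of fixed width coming from $T_*\leq 1$, contributing a term of the form $e^{-c(w-r)^2}$ for some $c>0$ (packaged as $e^{-2(w-r)^2/3}$ in the statement, with the exact constant obtained from a finer analysis of the error-function difference). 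Parallel bounds for $|\tau(w)|$ and $\int_w^{+\infty}|\tau(\rho)|d\rho$ follow from the standard $\operatorname{erf}$ tail estimate. Feeding these into Lemma \ref{keyestim}(b) and splitting at $w=2r$ gives the two claimed ranges, with $w=0$ producing \eqref{estimatef1of1}.

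For (b) and (c), the substitution $x=\sinh w$ transforms $\int_u^\infty f_0(x^2+1)/\sqrt{x^2-u^2}\,dx$ into $\int_{w_0}^\infty f_0(\cosh^2 w)\cdot \cosh w/\sqrt{\sinh(w-w_0)\sinh(w+w_0)}\,dw$ with $w_0=\sinh^{-1}u$, using $\sinh^2w-\sinh^2w_0=\sinh(w-w_0)\sinh(w+w_0)$; similarly $(xf_1(x^2+1))'=(\cosh w)^{-1}(\sinh w\cdot f_1(\cosh^2 w))'$. In the regime $w_0\geq 2r$ of (b), the inequality $(w-r)^2\geq r^2/2+(w-r)^2/2$ (valid since $|w-r|\geq r$) factors the bound from (a)(i) as $Te^{-T^2r^2/2}\cdot e^{-T^2(w-r)^2/2}$ (and analogously $e^{-r^2/2}\cdot e^{-(w-r)^2/3}$ for $f_1$), extracting the decay prefactors in \eqref{intf0large} and \eqref{intf1large}. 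The remaining Gaussian integral against the geometric weight is finite and, after splitting the range near and far from $w_0$, produces the $u^{-2}$ factor via the asymptotic $\sinh(w+w_0)\sim ue^{w-w_0}$ combined with the near-endpoint behaviour $(w-w_0)^{-1/2}$. For (c), no endpoint singularity is present and $1/\sqrt{x^2+u^2}\ll 1$ for $u\gg 1$, reducing the claim to integrability of $f_0(\cosh^2 w)\cosh w$ and $(\sinh w\cdot f_1(\cosh^2 w))'$, both of which follow at once from the bounds of (a). The main obstacle is the bookkeeping in (b): balancing the integrable singularity at $w=w_0$ against the Gaussian decay to extract the $u^{-2}$ gain requires a careful split of the integration range and attention to the geometric factor $\coth w_0$.
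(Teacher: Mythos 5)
Your part (a) is essentially the paper's proof: both compute $\omega$ as a pair of Gaussians centred at $\pm r$ and $\tau$ as a difference of error functions at the two scales $T$ and $T/\sqrt{T^2+1}$, then feed the bounds on $|\omega|$, $|\tau|$, $|\tau'|$ and their tails into Lemma \ref{keyestim}; your overall constant and your hedging on the exact exponent $2/3$ are immaterial for $\ll$-bounds. For parts (b) and (c), however, you take a genuinely different route. The paper never substitutes $x=\sinh w$ in these integrals: it upgrades the Gaussian bound of (a)(i) to the polynomial bound $f_0(x^2+1)\ll Te^{-T^2r^2/2}\,x^{-2}$ (splitting off $e^{-T^2r^2/2}$ using $\sinh^{-1}x\ge 2r$, then using $T^2(\sinh^{-1}x-r)^2/2\ge(\log x)^2/8$ and $e^{-(\log x)^2/8}\ll x^{-2}$ for $x\ge1$, with $x<1$ trivial), after which \eqref{intf0large} follows in one line from the exact scaling $\int_u^\infty x^{-2}(x^2-u^2)^{-1/2}\,dx=u^{-2}\int_1^\infty y^{-2}(y^2-1)^{-1/2}\,dy$. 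That device produces the $u^{-2}$ uniformly in $u$ with no case analysis, which is precisely the step your sketch defers to ``bookkeeping.''

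And that bookkeeping, as you describe it, is wrong in its key mechanism. The asymptotic $\sinh(w+w_0)\sim ue^{w-w_0}$ is off by a factor of $u$: for large $w_0$ one has $\sinh(w+w_0)\asymp u^2e^{w-w_0}$, so the geometric kernel near the endpoint is $\cosh w/\sqrt{\sinh(w-w_0)\sinh(w+w_0)}\asymp (w-w_0)^{-1/2}e^{(w-w_0)/2}$, which is \emph{independent of $u$} and therefore cannot ``produce the $u^{-2}$ factor'' as you assert. In your coordinates the entire $u^{-2}$ must be harvested from the Gaussian decay of $f_0$ itself, e.g.\ by reserving $T^2(w-r)^2/2\ge T^2r^2/2$, then using $T^2(w-r)^2/4\ge(w_0-r)^2/4\ge 2w_0-O(1)$ so that $e^{-(w_0-r)^2/4}\ll e^{-2w_0}\le u^{-2}$, and keeping the last quarter of the exponent to dominate the kernel's $e^{(w-w_0)/2}$ growth and endpoint singularity; for $u\le1$ one instead absorbs the $\log(1/u)$ coming from $\int_{2w_0}^{1}dw/\sqrt{w^2-w_0^2}$ into $u^{-2}$. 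So your plan for (b) is repairable, but the step you label the ``main obstacle'' is the actual content of the estimate and the mechanism you name for it does not work. Part (c) is fine and matches the paper (split at $\sinh 2r$ and apply the two cases of (a)).
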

\begin{proof}
We start with the estimates related to $f_0$:
\item[(a)- i)]
As
$$d_t(f_0)=e^{-t^2/4T^2}\cos(rt),$$
using Eq. \ref{omegadef} and standard properties of the Fourier Transform (in particular, \cite[3.1.8, 3.2.23]{bateman}), we have $$\omega(x)=\frac{T}{2 \sqrt{\pi}} \left(e^{-T^2(x-r)^2} +e^{-T^2(x+r)^2}\right).$$
For $x \geq 0$, we deduce $$ \omega(x) \ll Te^{-T^2(x-r)^2}.$$
Hence, by Lemma \ref{keyestim} (a),

\begin{eqnarray}f_0(\cosh^2{w}) &\ll& T
e^{-T^2(w-r)^2}+T \int_{w}^{+\infty} e^{-T^2(\rho-r)^2}d\rho
\nonumber \\ &=&
Te^{-T^2(w-r)^2}+ \int_{T(w-r)}^{+\infty} e^{-y^2}dy
.  \label{festimate1}
 \end{eqnarray}
For $w>2r$, using Eq. (\ref{festimate1}) and the fact that, for $x \geq 0$, the \emph{complementary error function} $\hbox{erfc}(x)$ satisfies \begin{equation}\hbox{erfc}(x):=\frac{2}{\sqrt{\pi}}\int_{x}^{+\infty} e^{-y^2} dy = \frac{2}{\sqrt{\pi}}\int_{0}^{+\infty} e^{-(x+u)^2} du \leq \frac{2}{\sqrt{\pi}}\cdot e^{-x^2}\cdot \int_{0}^{+\infty} e^{-u^2} du = e^{-x^2}, \label{erfc} \end{equation}
we have
\begin{eqnarray}f_0(\cosh^2{w}) &\ll& 
Te^{-T^2(w-r)^2}+e^{-T^2(w-r)^2}
\nonumber \\ &\ll& T
e^{-T^2(w-r)^2} . \label{festimate2}  \end{eqnarray}
On the other hand, for $w<2r$, Eq. (\ref{festimate1}) gives
\begin{equation} \label{festimate3}f_0(\cosh^2{w}) \ll T e^{-T^2(w-r)^2} +1. \end{equation}
\item[(b)- i)] For $ u> \sinh{2r}$, using Eq. (\ref{festimate2}), 
we have, for every $x \geq u$,
\begin{equation} \label{bismall} f_0(x^2+1) \ll Te^{-T^2(\sinh^{-1}x-r)^2} \ll Te^{-T^2r^2/2}\cdot e^{-T^2(\sinh^{-1}x-r)^2/2}. \end{equation}

On the other hand, for $x \geq 1$,
$$T^2(\sinh^{-1}x-r)^2/2 \geq T^2(\sinh^{-1}x)^2/8 \geq \left(\log{x}\right)^2/8,$$
giving
$$f_0(x^2+1) \ll Te^{-T^2r^2/2}\cdot e^{-\left(\log{x}\right)^2/8} \ll Te^{-T^2r^2/2} \cdot x^{-2}. $$
It is easy to see that this bound is still valid if $x<1$. Indeed, in that case, eq. (\ref{bismall}) gives
$$f_0(x^2+1) \ll Te^{-T^2r^2/2}\cdot e^{-T^2(\sinh^{-1}x-r)^2/2} \ll Te^{-T^2r^2/2} \ll Te^{-T^2r^2/2} \cdot x^{-2}. $$
Hence,
\begin{eqnarray}\int_{u}^{+\infty} \frac{f_0(x^2+1)}{\sqrt{x^2-u^2}} dx &\ll& Te^{-T^2r^2/2}\int_{u}^{+\infty} 
\frac{1}{x^2\sqrt{x^2-u^2}} dx \nonumber \\
&=& Te^{-T^2r^2/2}\cdot u^{-2} \cdot  \int_{1}^{+\infty} 
\frac{1}{y^2\sqrt{y^2-1}} dy
\nonumber \\
&\ll& Te^{-T^2r^2/2}\cdot u^{-2},
\label{lemmabb}
\end{eqnarray}
as required.
\item[(c)- i)]
Finally, for $u \gg 1$, 
\begin{equation}\int_{0}^{+\infty} \frac{f_0(x^2+1)}{\sqrt{x^2+u^2}} dx \ll  \int_{0}^{+\infty} \frac{|f_0(x^2+1)|}{\sqrt{x^2+1}} dx. \nonumber
\end{equation}
Using Eq. (\ref{festimate3}) for $0\leq x \leq \sinh{2r}$ and Eq. 
 (\ref{festimate2}) for $x \geq \sinh{2r}$,
\begin{eqnarray}\int_{0}^{+\infty} \frac{f_0(x^2+1)}{\sqrt{x^2+u^2}} dx &\ll& \int_{0}^{\sinh{2r}} 1 dx +T\int_{0}^{+\infty} \frac{1}{\sqrt{x^2+1}}\cdot e^{-T^2(\sinh^{-1}{x}-r)^2} dx \nonumber \\
&\ll& 1+T\int_{0}^{+\infty}  e^{-T^2(w-r)^2} dw \ll 1, \end{eqnarray}
as required.

For the case of $f_1$, we note that
$$\tau(\rho)=\frac{i}{\pi} \int_{-\infty}^{+\infty}e^{i \rho t}td^{(1)}_t(f_1)dt=\frac{i}{2\pi} \int_{-\infty}^{+\infty}e^{i \rho t}e^{-t^2/4T^2}\left(\frac{1-e^{-t^2/4}}{t}\right)\cos\left(rt\right)\, dt.$$
Using standard properties of the Fourier transform (in particular, \cite[3.1.8, 3.1.10, 3.2.23]{bateman}), we deduce that
\begin{equation}\label{tau}\tau(\rho)=\frac{\sqrt{\pi}}{4}\left(\hbox{erfc}\left(T\left(\rho-r\right) \right)-\hbox{erfc}\left( B \left(\rho-r\right)\right)+\hbox{erfc}\left(T\left(\rho+r\right) \right)-\hbox{erfc}\left( B\left(\rho+r\right)\right) \right), \end{equation}
where
$B=T/\sqrt{T^2+1}=1+o(1)$.
Using the inequality (\ref{erfc}) for $x>0$, we have, for $\rho>r$,
\begin{equation} \label{tauineq}\tau(\rho) \ll e^{-T^2(\rho-r)^2}+e^{-2(\rho-r)^2/3}.\end{equation}
On the other hand, as $\hbox{erfc}(x)$ is bounded, we have
\begin{equation} \label{taubdd}\tau(\rho)\ll 1,\end{equation} for every $\rho \geq 0$.
Furthermore, differentiating Eq. (\ref{tau}) gives
$$\tau'(\rho)=-\frac{1}{2}\left(Te^{-T^2\left(\rho-r\right)^2}-Be^{-B^2\left(\rho-r\right) ^2}+Te^{-T^2\left(\rho+r\right)^2}-Be^{-B^2\left(\rho+r\right)^2}\right),$$
and hence
\begin{equation} \label{tauderineq}\tau'(\rho) \ll Te^{-T^2\left(\rho-r\right)^2}+e^{-2\left(\rho-r\right) ^2/3}. \end{equation}
\item[(a)- ii)] Using Eq.(\ref{tauineq}), and Eq. (\ref{tauderineq}),  Lemma \ref{keyestim} b) gives that, for $w>r$,
\begin{eqnarray}\left(\sinh{w}\cdot f_1\left(\cosh^2{w}\right)\right)'&\ll& Te^{-T^2\left(w-r\right)^2}+e^{-2\left(w-r\right) ^2/3}
+\int_{w}^{+\infty}\left(Te^{-T^2\left(\rho-r\right)^2}+e^{-2\left(\rho-r\right) ^2/3}\right) d \rho \nonumber \\
&\ll& Te^{-T^2\left(w-r\right)^2}+e^{-2\left(w-r\right) ^2/3}, \nonumber
\end{eqnarray}
as required.

On the other hand, for $w<r$, by Eq. (\ref{tauineq}), Eq. (\ref{taubdd}) and Eq. (\ref{tauderineq}), Lemma \ref{keyestim} b) gives
\begin{align} &\left(\sinh{w}\cdot f_1\left(\cosh^2{w}\right)\right)'\nonumber \\
& \qquad \ll {} Te^{-T^2\left(w-r\right)^2}+e^{-2\left(w-r\right) ^2/3}+1
+\int_{w}^{r}1 d \rho \nonumber+\int_{r}^{+\infty}\left(Te^{-T^2\left(\rho-r\right)^2}+e^{-2\left(\rho-r\right) ^2/3}\right) d \rho \nonumber \\
&\qquad \ll {} Te^{-T^2\left(w-r\right)^2}+1+r+\hbox{erfc}(0) \ll Te^{-T^2\left(w-r\right)^2}+1, \nonumber
\end{align}
as required.

(b)-ii) and (c)-ii) follow from (a)-ii) in the same manner as (b)-i),(c)-i) followed from (a)-i).
\end{proof}
\begin{remark}
Alternatively, the result can also be proved by applying \cite[Lemma 5.1]{chamizok} to Lemma \ref{equivalence}.
\end{remark}
\section{Proof of the Sieve Inequality} \label{pfofsieve}
Applying the relative trace formula \cite[(3.2)]{lekkas} for the function $f$ of Lemma \ref{estlemma}, and using Lemma \ref{estlemma}, we now prove Theorem \ref{sievevar}.
\begin{proof}[Proof of Theorem \ref{sievevar}]
We first derive eq. (\ref{highperiods}), from which it follows that it is enough to prove the cases $m=0$ and $m=1$. Let $c_{m}:=\sqrt{\lambda_j+m^2+m}.$
By definition, we have
\begin{equation}c_{m+1}u_{m+2,j}=i K_{m+1}u_{m+1,j}. \label{eq1step1} \end{equation}
On the other hand, by \cite[eq.(3)\&(8)]{fay}, we have
\begin{equation}c_mu_{m,j}=i\overline{K_{-m-1}}u_{m+1,j}. \label{eq1step2}
\end{equation}
Adding eq. (\ref{eq1step1}) to eq. (\ref{eq1step2}), and using the fact that
$$K_{m+1}= iy\left( \frac{\partial}{\partial x}-i\frac{\partial}{\partial y}\right)+(m+1),$$ we deduce that
$$ c_{m+1}u_{m+2,j}+c_mu_{m,j}=2iy\frac{\partial}{\partial y}u_{m+1,j}.$$
Hence, integrating along $l$ with respect to $ds(z)$, we have
\begin{eqnarray}c_{m+1}\hat{u}_{m+2,j}+c_m \hat{u}_{m,j}&=&\int_{l}2iy\frac{\partial}{\partial y}u_{m+1,j}(z) ds(z) \nonumber \\
&=& 2i \int_{1}^{\lambda}\frac{\partial}{\partial y}u_{m+1,j}(iy) dy \nonumber \\
&=& 2i\left( u_{m+1,j}(\lambda \cdot i)-u_{m+1,j}(i) \right)=0, \nonumber 
\end{eqnarray}
where $\lambda=\exp\left(\hbox{len}(l)\right)$. The last equality follows from periodicity of $u_{m+1,j}$ in the $u$ variable. Hence, we have
$$c_{m+1}\hat{u}_{m+2,j}=-\frac{c_m}{c_{m+1}} \hat{u}_{m,j}=-\sqrt{\frac{m^2+m+\lambda_j}{m^2+3m+2+\lambda_j}}\hat{u}_{m,j},$$ as required.
The relation for the periods of Eisenstein series follows in the same way.

For $m=0$, let $$M= \sum_{\nu=1}^R \Big| \sum_{|t_j|\le T}a_jx_\nu^{it_j}\widehat{u}_{0,j}+\frac{1}{4\pi}\sum_{\mathfrak{a}}\int_{-T}^{T}a_{\mathfrak{a}}(t)x_{\nu}^{it}\hat{E}_{\mathfrak{a}, 0}(1/2+it)\; dt\Big|^2 \; .$$
By duality there exists a unit complex vector $\mathbf{b}=(b_1,b_2, \dots ,b_R),$ such that
$$M=\Bigg(\sum_{\nu=1}^R b_{\nu}\left|\sum_{|t_j|\le T}a_jx_\nu^{it_j}\widehat{u}_{0,j}+\frac{1}{4\pi}\sum_{\mathfrak{a}}\int_{-T}^{T}a_{\mathfrak{a}}(t)x_{\nu}^{it}\hat{E}_{\mathfrak{a}, 0}(1/2+it)\; dt\right|\Bigg)^2 \; .$$
After changing the order of summation and applying the Cauchy--Schwarz inequality on the space $\mathbb{C}^R$, we get
$$ M\ll ||\mathbf{a}||_*^2\Tilde{M} \; ,$$
where $\Tilde{M}$ is defined as
$$ \Tilde{M} = \sum_{|t_j|\le T}\Big|\sum_{\nu=1}^R b_{\nu}x_{\nu}^{it_j}\widehat{u}_{0,j}\Big|^2+\frac{1}{4\pi}\sum_{\mathfrak{a}}\int_{-T}^{T}\Big|\sum_{\nu=1}^R b_{\nu}x_{\nu}^{it}\hat{E}_{\mathfrak{a}, 0}(1/2+it)\Big|^2 \;dt.$$
We extend the summation for $|t_j|<T$ and integration for $-T\leq t \leq T$ to the whole spectrum using smooth weights that approximate the indicator of $|t|<T$ from above. In particular, we use weights of the form $\hbox{exp}(-t^2/4T^2)$, which decay rapidly at infinity and are bounded away from zero in the interval $[-T,T]$. We have 
$$ \Tilde{M} \ll \sum_j \ \exp\left(-t_j^2/4T^2\right)\Big|\sum_{\nu=1}^R b_{\nu}x_{\nu}^{it_j}\widehat{u}_{0,j}\Big|^2 +\sum_{\mathfrak{a}}\int_{-\infty}^{+\infty}\exp\left(-t^2/4T^2\right)\Big|\sum_{\nu=1}^R b_{\nu}x_{\nu}^{it}\hat{E}_{\mathfrak{a}, 0}(1/2+it)\Big|^2 \;dt.$$
After opening the squares and changing the order of summation we get
\begin{equation} \label{mineq1} M \ll ||\mathbf{a}||_*^2\max_{\mu \in \{1,2, \dots , R\}}\sum_{\nu=1}^R\big|S_{\nu\mu}\big| \; ,\end{equation} where we define
$$S_{\nu\mu}= \sum_j \exp\left(-t_j^2/4T^2\right)\cos(r_{\nu\mu}t_j)\widehat{u}_{0,j}^2+\frac{1}{4 \pi}\sum_{\mathfrak{a}}\int_{-\infty}^{+\infty}\exp\left(-t^2/4T^2\right)\cos{(r_{\mu \nu}t)}\Big|\hat{E}_{\mathfrak{a}, 0}(1/2+it)\Big|^2 \;dt,$$
and $$ r_{\nu\mu}=|\log(x_\nu/x_\mu)| \; .$$
Since $X\le x_{\nu},x_{\mu} \le 2X$, we get
$$r_{\nu\mu}=|\log(x_{\nu}/x_{\mu})|\le \log 2 \: $$
and, for $\mu \neq \nu$, by the mean value theorem for $\log{x}$, \begin{equation} \label{rjbound} r_{\nu\mu} \geq \frac{|x_{\nu}-x_{\mu}|}{\hbox{max}\left(x_{\nu},x_{\mu}\right)} \geq \frac{(j+1)\delta}{2X} , \end{equation}
where $j$ is the number of $x_i$'s between $x_{\nu}$ and $x_{\mu}$.
Fixing $r=r_{\mu \nu}$, let $S:=S_{\mu \nu}$.
From Theorem \ref{modtrace} a), we have
$$S \ll f_0(1) \hbox{len}(l)+\sum_{\gamma \in \Gamma_1 \backslash \Gamma \slash \Gamma_1-\mathrm{id}} 2\int_{ \sqrt{\mathrm{max}(B^2(\gamma)-1,0)}}^{\infty}  \frac{f_0\left( x^2+1  \right)}{\sqrt{x^2+1-B^2(\gamma)}}dx,$$
where $f_0$ is as in Lemma \ref{estlemma}.

We can partition 
$x_1,\dots,x_R$ into finitely many subsets such that each subset lies in an interval of the form  $[\rho^i X, \rho^{i+1}X]$ , where $\rho=1+\epsilon$, with $\epsilon>0$ a small fixed number. Hence, we can assume that $r_{\mu \nu}$ is smaller than any fixed constant (namely, smaller than $\log{\rho}$). In particular, we can assume that there is no $\gamma$ with $1 < |B(\gamma)|\leq \cosh{2r}$.
Therefore, using Lemma \ref{estlemma}, and in particular Eq. (\ref{estimatef0of1}) and Eq. (\ref{geometricsmallf0}), we deduce that
$$S \ll 1+ Te^{-r^2T^2} +\sum_{|B(\gamma)|>\cosh{2r}} \int_{\sqrt{B^2(\gamma)-1}}^{+\infty} \frac{f_0(x^2+1)}{\sqrt{x^2+1-B^2(\gamma)}} dx.$$
From Lemma \ref{estlemma}(b), we have
$$\sum_{|B(\gamma)|>\cosh{2r}} \int_{\sqrt{B^2(\gamma)-1}}^{+\infty} \frac{f_0(x^2+1)}{\sqrt{x^2+1-B^2(\gamma)}} dx \ll Te^{-T^2r^2/2} \cdot 
 \sum_{|B(\gamma)|>\cosh{2r}}(B^2(\gamma)-1)^{-1}. $$
 Via partial summation, and the fact that $ \#\left\{ \gamma \in \Gamma_1 \backslash \Gamma \slash \Gamma_1 \vert \left| B(\gamma) \right| \leq X \right\} \ll X $ (see, for example, \cite[Theorem 1.2]{lekkas}),
 we deduce that
 \begin{equation}\label{geometricbig}\sum_{|B(\gamma)|>\cosh{2r}} \int_{\sqrt{B^2(\gamma)-1}}^{+\infty} \frac{f_0(x^2+1)}{\sqrt{x^2+1-B^2(\gamma)}} dx \ll Te^{-T^2r^2/2}, \nonumber  \end{equation}
and, therefore,
$$S \ll 1+Te^{-T^2r^2/2}.$$
Hence, for fixed $\mu$,
\begin{equation} \label{sumofsbound}\sum_{\nu=1}^{R}|S_{\mu \nu}| \ll R+T\sum_{\nu=1}^{R}e^{-T^2r_{\mu \nu}^2/2} \ll \delta^{-1}X+T\sum_{\nu=1}^{R}e^{-T^2r_{\mu \nu}^2/2}. \end{equation}
Furthermore, from Eq. (\ref{rjbound}), we have
\begin{eqnarray}\sum_{\nu=1}^{R}e^{-T^2r_{\mu \nu}^2/2} &\ll& 1+ \sum_{j=1}^{R-1} \exp\left(-\frac{j^2}{8}\cdot \left(\frac{T\delta}{X}\right)^2 \right) \nonumber \\
&\ll&
1+ \sum_{j < 2\sqrt{2}X/\delta T} 1+\sum_{j \geq 2\sqrt{2}X/\delta T }\frac{8}{j^2}\cdot \left(\frac{X}{T \delta }\right)^2\nonumber \\
&\ll& 1+  \frac{X}{\delta T }+\left(\frac{X}{T \delta }\right)^2\sum_{j \geq 2\sqrt{2}X/\delta T }\frac{1}{j^2}\nonumber \\
&\ll& 1+  \frac{X}{\delta T }+\left(\frac{X}{T \delta }\right)^2 \cdot \frac{\delta T}{X} \ll  1+  \frac{X}{\delta T }. \label{sumfogauss}
\end{eqnarray}
Combining Eq (\ref{sumofsbound}) and (\ref{sumfogauss}), we have
\begin{equation} \sum_{\nu=1}^{R}|S_{\mu \nu}| \ll \delta^{-1}X+T\cdot \left(1+ \frac{X}{\delta T} \right) \ll T+\delta^{-1}X.  \nonumber \end{equation}
Via Eq. (\ref{mineq1}), this concludes the proof of the theorem.

For $m=1$, for simplicity, assume $\Gamma$ cocompact (for the general cofinite case, compare with the proof of $m=0$ ).
In a similar manner as in the case $m=0$,
let $$M= \sum_{\nu=1}^R \Big| \sum_{|t_j|\le T}a_jx_\nu^{it_j}\widehat{u}_{1,j}\Big|^2 \; .$$
Once again, by duality there exists a unit complex vector $\mathbf{b}=(b_1,b_2, \dots ,b_R),$ such that
$$M=\Bigg(\sum_{\nu=1}^R b_{\nu}\sum_{|t_j|\le T}a_jx_\nu^{it_j}\widehat{u}_{1,j}\Bigg)^2 \; .$$
After changing the order of summation and applying the Cauchy--Schwarz inequality on the space $\mathbb{C}^R$, we get
$$ M\ll ||\mathbf{a}||_*^2\Tilde{M} \; ,$$
where $\Tilde{M}$ is defined as
$$ \Tilde{M} = \sum_{|t_j|\le T}\Big|\sum_{\nu=1}^R b_{\nu}x_{\nu}^{it_j}\widehat{u}_{1,j}\Big|^2 \;.$$
In a similar manner as in the case $m=0$, we extend the sum interval to the whole spectrum. In order to apply Theorem \ref{modtrace} b), we need we want weights $\lambda_j=|t_j|^2+1/4$ to appear. For that end, we modify the smooth coefficients chosen as follows:
$$ \Tilde{M} \ll \sum_j \ \frac{\lambda_j}{t_j^2}\exp\left(-t_j^2/4T^2\right)\left(1-\exp\left(-t_j^2/4\right)\right)\Big|\sum_{\nu=1}^R b_{\nu}x_{\nu}^{it_j}\widehat{u}_{1,j}\Big|^2 \; .$$
After opening the squares and changing the order of summation we get
\begin{equation} \label{mineq2} M \ll ||\mathbf{a}||_*^2\max_{\mu \in \{1,2, \dots , R\}}\sum_{\nu=1}^R\big|S_{\mu\nu}\big| \; , \nonumber\end{equation} where we define
$$S_{\mu\nu}= \sum_j \frac{\lambda_j}{t_j^2}\exp\left(-t_j^2/4T^2\right)\left(1-\exp\left(-t_j^2/4\right)\right)\cos(r_{\nu\mu}t_j)\widehat{u}_{1,j}^2. \;  $$
From Theorem \ref{modtrace} b), we have
$$S \ll f_1(1)\hbox{len}(l)+\sum_{\gamma \in \Gamma_1 \backslash \Gamma \slash \Gamma_1-\mathrm{id}} 2B(\gamma)\int_{ \sqrt{\mathrm{max}(B^2(\gamma)-1,0)}}^{\infty}  \frac{\left(xf_1\left( x^2+1  \right)\right)'}{\sqrt{x^2+1-B^2(\gamma)}}dx,$$
where $f_1$ is as in Lemma \ref{estlemma}.
The rest of the proof follows similarly with $m=0$, using Eq. (\ref{estimatef1of1}), (\ref{geometricsmallf1}) and (\ref{intf1large}) in place of Eq. (\ref{estimatef0of1}), (\ref{geometricsmallf0}) and (\ref{intf0large}).
\end{proof}

\section{Acknowledgements} Both authors would like to thank their PhD advisor, Yiannis Petridis, for suggesting the problem and for his advice and patience. They would also like to thank the anonymous referee for their helpful comments and suggestions. The second author was supported by University College London and the
Engineering and Physical Sciences Research Council (EPSRC) studentship grant EP/V520263/1. The second author was also supported by the Swedish Research Council under grant no. 2016-06596
while in residence at Institut Mittag-Leffler in Djursholm, Sweden during the Analytic Number Theory
programme in the spring of 2024.

\section*{Data Availability}
Data sharing is not applicable to this article as no datasets were generated or
analysed for this work. The authors have no conflicts of interest, financial or non-financial, to
declare which are relevant to the content of this article.

\end{document}